\documentclass[a4paper]{article}

\usepackage{amsmath, amsthm, amsfonts}
\usepackage[utf8]{inputenc}
\usepackage{srcltx}
\usepackage{amssymb}
\usepackage[colorlinks=true,linkcolor=blue,citecolor=blue]{hyperref}

\usepackage{geometry}\geometry{left=25mm,right=25mm,top=25mm,bottom=25mm}



\newcommand{\iO}{\int_{\Omega}}

\newcommand{\eps}{\varepsilon}

\def\N{\mathbb{N}}
\def\R{\mathbb{R}}

\def\pa{\partial}


\newtheorem{The}{Theorem}[section]
\newtheorem{cor}[The]{Corollary}
\newtheorem{Lem}[The]{Lemma}
\newtheorem{prp}[The]{Proposition}

\theoremstyle{definition}

\theoremstyle{remark}
\newtheorem{Rem}[The]{Remark}

\title{Global classical solutions for mass-conserving, (super)-quadratic reaction-diffusion systems 
in three and higher space dimensions}


\author{Klemens Fellner\footnote{Institut f\"ur Mathematik und
    Wissenschaftliches Rechnen, Heinrichstra{\ss}e 36, 8010
    Graz, Austria. Email: \texttt{klemens.fellner@uni-graz.at}. Partially supported by NAWI Graz.}
  \and Evangelos Latos\footnote{Institut f\"ur Mathematik und
    Wissenschaftliches Rechnen, Heinrichstra{\ss}e 36, 8010
    Graz, Austria. Email: \texttt{evangelos.latos@uni-graz.at}. Partially supported by NAWI Graz.}
  \and Takashi Suzuki\footnote{Graduate School of Engineering Science /
Department of Systems Innovation / Division of Mathematical Science, Osaka University. Email: \texttt{suzuki@sigmath.es.osaka-u.ac.jp}}
}


\def\R{I\!\!R}
\def\N{I\!\!N}
\def\pa{\partial}

\def\cd{\!\cdot\!}
\def\nx{\nabla_{\!x}}

\def\iO{{\int_{\Omega}}}

\begin{document}

\maketitle

\begin{abstract}
This paper considers quadratic and super-quadratic reaction-diffusion systems, for which all species satisfy uniform-in-time $L^1$ a-priori estimates, for instance, as a consequence of suitable mass conservation laws. A new result on the global existence of classical solutions is proved in three and higher space dimensions by combining regularity and interpolation arguments in Bochner spaces, a bootstrap scheme and a weak comparison argument. Moreover, provided that the considered system allows for entropy entropy-dissipation estimates proving exponential convergence to equilibrium, we are also able to prove that solutions are bounded uniformly-in-time. 
\\

\noindent {\bf Keywords.} 	Global solutions, Classical solutions, Convergence to equilibrium, Nonlinear reaction-diffusion systems, Entropy method, Mass-conserving system.

\noindent {\bf MSC(2010)} 35K61, 35A01, 35B40, 35K57
\end{abstract}

\section{Introduction}

In this article, we first consider the following quadratic reaction-diffusion system:
\begin{equation}\label{sys4}
 \begin{cases}
 \pa_t u_1 - d_1\, \Delta_x u_1 = u_3\,u_4 - u_1\, u_2, &\qquad \text{on}\quad \Omega_T,\\
 \pa_t u_2 - d_2\, \Delta_x u_2 = u_3\,u_4 - u_1\, u_2, &\qquad \text{on}\quad \Omega_T,\\
 \pa_t u_3 - d_3\, \Delta_x u_3 = u_1\, u_2 - u_3\,u_4, &\qquad \text{on}\quad \Omega_T,\\
  \pa_t u_4 - d_4\, \Delta_x u_4 = u_1\, u_2 - u_3\,u_4, &\qquad \text{on}\quad \Omega_T,\\
 n(x)\cd\nx u_i(t,x) = 0, &\qquad \text{on}\quad \partial\Omega_T,\\
  u_i(0,x) = u_{i0}(x)\ge0, &\qquad \text{on}\quad \Omega,
 \end{cases}
\end{equation}
where $u_i := u_i(t,x)\ge0$ for $i=1,2,3,4$ denotes non-negative concentrations at time $t$ and 
position $x$ of four species $\mathcal{A}_i$ subject to non-negative initial concentrations $u_{i0}(x)\ge0$.   
Moreover, $d_i>0$ are the corresponding positive and constant diffusion coefficients. We suppose $x\in\Omega$, where $\Omega$ is a bounded domain of $\R^N$ ($N\ge 1$) with 
sufficiently smooth boundary $\partial\Omega\in C^{2+\alpha}$, $\alpha>0$. 
Finally, $n(x)$ is the outer normal unit vector at point $x$ of $\pa\Omega$ and we denote $\Omega_T=[0,T]\times\Omega$ and $\partial\Omega_T=[0,T]\times\partial\Omega$ for any $T>0$.

The above system \eqref{sys4} constitutes a mass-action law model of the evolution of a mixture of four diffusive species $\mathcal{A}_i, i=1,2,3,4$ 
undergoing the single reversible reaction 
\begin{equation*}
\mathcal{A}_1 + \mathcal{A}_2 \rightleftharpoons 
\mathcal{A}_3 + \mathcal{A}_4
\end{equation*}
until a unique positive detail balance equilibrium is reached in the large time behaviour, see Section \ref{Pre}.
For a derivation of 
system \eqref{sys4} or related mass-action law reaction-diffusion systems from kinetic models or fast reaction limits, we refer to \cite{BD,BCD,BP,DMP}.

Note that for the sake of readability, we have set the forward and backward reaction rates in \eqref{sys4} equal to one (the general case can be treated without any additional difficulty). Also, without loss of generality, we shall also assume that $\Omega$ is normalised (i.e. $|\Omega|=1$), which can always be achieved by rescaling the spatial variable.  
\bigskip


The quadratic reaction-diffusion system \eqref{sys4} and in particular the question of global-in-time solutions has lately received a lot of attention, see e.g. \cite{CDF,DFPV,GV}.

In \cite{DFPV}, for instance, a duality argument in terms of entropy density variables was 
used to prove the existence of global, non-negative $L^2$-weak solutions in any space dimension, see also Section \ref{Pre}.

While the existence of global classical solutions in 1D follows already 
from Amann, see e.g. \cite{Ama}, the existence of global classical solutions in 2D was recently shown by Goudon and Vasseur in \cite{GV} by using De Giorgi's method. For higher space dimensions the existence of classical solutions constitutes in general an open problem, for which the Hausdorff dimension of possible singularities was characterised in \cite{GV}. 

The (technical) criticality of quadratic nonlinearities was underlined by Caputo and Vasseur in \cite{CV}, where smooth solutions were shown to exist in any space dimension for systems with nonlinearities of sub-quadratic power law type, see also e.g. \cite{Ama}. 

A further related result by Hollis and Morgan \cite{HM} showed that 
if blow-up (here that is a concentration phenomenon since the total mass is conserved) occurs in system \eqref{sys4} in one concentration $u_i(t,x)$ at some time $t$ and position $x$, then at least one more concentration $u_{j \neq i}$ has also to blow-up (i.e. concentrate) at the same time and position. 
The proof of this result is based on a duality argument.

Recently in \cite{CDF}, an improved duality method allowed 
to show global classical solutions of system \eqref{sys4} in 2D 
in a significantly shorter and less technical way than De Giorgi's method.
\bigskip

In this work, we will prove in higher space dimensions $N\geq3$ and under a dimension-dependent closeness condition of the diffusion coefficients $d_i$ that the global, $L^2$-weak solutions to \eqref{sys4} (see \cite{DFPV}) are in fact global classical solutions. More precisely, we shall denote by 
\begin{equation}\label{delta}
\delta := b -a, \qquad\text{with}\quad b:=\sup_{i= 1..4} \{d_i\}, \quad a:=\inf_{i= 1..4} \{d_i\}>0,
\end{equation}
the maximal distance between the diffusion rates appearing in \eqref{sys4}
and prove the following:

\begin{The}[Global classical solutions and uniform-in-time bounds]\label{Theorem} \hfill\\
Consider $N\geq3$ and assume that $\delta$, $a$ and $b$ as given in \eqref{delta} satisfy
\begin{equation}\label{deltacon1}
\delta  < 2\,(C_{\frac{a+b}2, 2\Gamma})^{-1},\qquad\text{for}\quad
\Gamma>\frac{N+2}{2}-\frac{2N}{N+2}\ge\frac{13}{10},
\end{equation}
where $C_{\frac{a+b}2, 2\Gamma}$ is defined in Proposition \ref{propone}.
Assume initial data $u_{i0}\in L^{\frac{3N}{4}}(\Omega)$ (which also implies with $\frac{3N}{4}>2\Gamma$, assumption \eqref{deltacon1}, estimate \eqref{uapriori} of Proposition \ref{propone} and the theory of global, weak solutions in \cite{DFPV}, the existence of global, weak solutions $u_i\in L^{2\Gamma}(\Omega_T)$ to \eqref{sys4} for any $T>0$). 

Then, any global, weak solutions $u_i\in L^{2\Gamma}(\Omega_T)$ for any $T>0$ is a global, classical solutions to  \eqref{sys4}. Moreover, these solutions are bounded uniformly-in-time
in the sense that for all $\tau>0$, there exists a positive constant $C=C(\tau)$ such that  
\begin{equation}\label{Linftybound}
\|u_i(t,\cdot)\|_{L^{\infty}(\Omega)} \le C(\tau), \qquad t\ge\tau>0,\qquad \text{for all} \quad 
i=1,2,3,4.
\end{equation}

\end{The}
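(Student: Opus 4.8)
The plan is to bootstrap regularity starting from the $L^{2\Gamma}(\Omega_T)$ integrability afforded by the weak-solution theory. The basic engine is the following elementary but crucial structural observation: summing the four equations in \eqref{sys4} shows that $\sum_i u_i$ satisfies a conservation law, and certain linear combinations (namely $u_1+u_3$, $u_1+u_4$, $u_2+u_3$, $u_2+u_4$) satisfy equations whose reaction terms vanish pointwise; hence each such combination solves a \emph{linear} heat equation with distinct diffusion coefficients plus a nonnegative right-hand side, which is amenable to a comparison argument. The role of the closeness condition \eqref{deltacon1} is precisely to make a perturbative/duality argument around the "average" diffusion operator $\frac12(a+b)\Delta$ work: writing $d_i\Delta = \frac{a+b}{2}\Delta + (d_i - \frac{a+b}{2})\Delta$ with $|d_i - \frac{a+b}{2}|\le \delta/2$, one treats the discrepancy term as a small perturbation controlled by the constant $C_{\frac{a+b}{2},2\Gamma}$ from Proposition \ref{propone}, so that \eqref{deltacon1} guarantees a genuine gain of integrability at each step. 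Thus the first move is to assemble the $L^p$ maximal regularity / heat-semigroup smoothing estimates in Bochner spaces together with the weak comparison principle for the linear combinations, as encoded in Proposition \ref{propone}.

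Second, I would set up the bootstrap iteration proper. Given $u_i \in L^{q}(\Omega_T)$ for some exponent $q$ (starting at $q = 2\Gamma$), the nonlinearities $u_iu_j$ lie in $L^{q/2}(\Omega_T)$; feeding this into the parabolic regularity estimate for each $u_i$ — using the heat semigroup generated by $d_i\Delta$ with Neumann boundary conditions and the perturbation bound above — yields $u_i$ in a better space, say $L^{q'}(\Omega_T)$ (or better, a space-time anisotropic Sobolev/Bochner space $L^s_t W^{2,r}_x \cap W^{1,s}_t L^r_x$), with $q' > q$ as long as $q$ is below the critical threshold. The exponent $\Gamma > \frac{N+2}{2} - \frac{2N}{N+2}$ is tuned so that the first step already places us strictly above the scaling-critical exponent $\frac{N+2}{2}$ for quadratic nonlinearities, after which the iteration self-improves and, via Sobolev embeddings in the anisotropic spaces, terminates in $u_i \in L^\infty(\Omega_T)$ on every finite time interval. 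Once $L^\infty_{loc}$ bounds are in hand, the nonlinearities are Hölder continuous and classical Schauder theory upgrades the $u_i$ to classical solutions, which is the first assertion of the theorem.

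Third, for the uniform-in-time bound \eqref{Linftybound}, I would re-run the same bootstrap but on shifted time intervals $[\tau + n, \tau + n + 2]$ and track constants carefully, using the fact that the $L^1$ a-priori estimate (from mass conservation) is uniform in time; the only extra ingredient needed to close the loop uniformly is an improvement of the base integrability uniformly in time. This is where the entropy entropy-dissipation estimate enters: exponential convergence to the detailed-balance equilibrium in relative entropy (hence, by Csisz\'ar–Kullback–Pinsker, in $L^1$, and then by interpolation with any higher uniform-in-time $L^p$ bound coming from the improved duality estimate \eqref{uapriori} of Proposition \ref{propone}) provides a uniform-in-time bound on $\|u_i\|_{L^{q_0}(\Omega)}$ for some $q_0$ large enough to seed the bootstrap. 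Running the finite-interval argument with this uniform seed and absorbing the exponentially small tails yields constants $C(\tau)$ independent of $t\ge\tau$.

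The main obstacle I anticipate is the bookkeeping in the bootstrap step: one must verify that the perturbative heat-regularity estimate actually produces a \emph{strictly} increasing sequence of exponents with a definite gap, uniformly until $L^\infty$ is reached, and that the closeness condition \eqref{deltacon1} with the specific constant $C_{\frac{a+b}{2},2\Gamma}$ is exactly what is needed for the Neumann semigroup on $\Omega$ (not $\R^N$) — boundary effects and the fact that the perturbation $(d_i - \frac{a+b}{2})\Delta$ is a second-order operator, not lower-order, mean the smallness must be measured in the right operator norm on the right Bochner space, and getting that norm to be exactly $\frac12(a+b)\Delta$-subordinate with constant $C_{\frac{a+b}{2},2\Gamma}$ is the technical heart. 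The interplay with the comparison argument for the linear combinations $u_i + u_j$ — which is what lets one avoid needing smallness of $\delta$ relative to \emph{all} pairs simultaneously and instead only relative to deviations from the average — is the conceptual crux that makes the whole scheme go through.
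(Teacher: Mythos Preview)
Your sketch misidentifies both where the $\delta$-smallness enters and what makes the bootstrap close. The condition \eqref{deltacon1} is used \emph{once}, in the duality estimate of Proposition~\ref{propone}, to obtain the seed $u_i\in L^{2\Gamma}(\Omega_T)$; after that each $u_i$ solves the plain heat equation $\partial_t u_i-d_i\Delta u_i=f_i$ and no perturbation around $\tfrac{a+b}{2}\Delta$ is invoked again. More seriously, your claim that ``the first step already places us strictly above the scaling-critical exponent $\tfrac{N+2}{2}$'' is false: the whole point of the theorem (cf.\ Remark~\ref{rem1}) is the novel range $\Gamma\in(\tfrac{N+2}{2}-\tfrac{2N}{N+2},\tfrac{N+2}{2})$, strictly below criticality, where a naive isotropic $L^p(\Omega_T)$ bootstrap does not obviously self-improve in a single step. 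The paper's actual mechanism is an energy argument in \emph{anisotropic} Bochner spaces: testing with $p|u|^{p-1}$ and using Sobolev in space yields control of $\|u\|_{q,p}$ with $q=\tfrac{pN}{N-2}$ (Lemma~\ref{Lemma31}); the crucial ingredient you omit entirely is interpolation with the uniform-in-time $L^1$ bound \eqref{M} from mass conservation, which both kills the exponential Gronwall growth and produces the recursion $p_{n+1}=\tfrac{Np_n}{2(N-p_n)}$ (Lemmata~\ref{equal}--\ref{unequal} plus Bochner interpolation, Lemma~\ref{LLInterpol}). This recursion has fixed point $\tfrac{N}{2}$ and is increasing above it, and the threshold $\Gamma>\tfrac{N+2}{2}-\tfrac{2N}{N+2}$ is exactly the condition $p_0>\tfrac{N}{2}$.

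Two further discrepancies: the weak comparison step is not about the sums $u_i+u_j$ solving linear equations, but about dominating each $u_i$ by the solution $\bar u$ of a linear problem with prescribed source $g=\lambda u_i+Cu^2$, then bootstrapping via heat-semigroup $L^q\!\to\!L^r$ smoothing (Lemma~\ref{weakcomparisonbootstrap}); this requires the pointwise-in-time bound $u\in L^{\mu_0,\infty}$ with $\mu_0>\tfrac{N}{2}$, which is precisely what the anisotropic bootstrap supplies. For the uniform-in-time bound \eqref{Linftybound}, the paper does not re-run the bootstrap on shifted intervals; instead it observes that all constants in the chain grow at most polynomially in $T$, and then interpolates via Gagliardo--Nirenberg between the exponential $L^1$ decay of Proposition~\ref{Convergence} and the polynomially growing $H^k$ norms, so that $e^{-cT}\cdot\mathrm{poly}(T)$ is bounded. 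Your shifted-interval idea with a uniform $L^{q_0}$ seed from entropy decay plus CKP is plausible in spirit, but would still need the anisotropic machinery above to turn that seed into an $L^\infty$ bound on each interval.
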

\medskip

\begin{Rem}\label{rem1}
The actual novelty of Theorem \ref{Theorem} concerns the range $\Gamma\in(\frac{N+2}{2}-\frac{2N}{N+2},\frac{N+2}{2})$ because when $\Gamma\ge\frac{N+2}{2}$, then the quadratic right hand side terms of \eqref{sys4} is bounded in $L^{\frac{N+2}{2}}(\Omega_T)$ and standard parabolic regularity estimates (which reflect the convolution with the heat kernel being in $L^{\frac{N+2}{N}-\eps}$, for all $\eps>0$) implies $u_i\in L^\infty(\Omega_T)$ and thus classical solutions, see e.g. Proposition \ref{propthree} from \cite{CDF}, where the more stringent $\delta$-smallness condition \eqref{deltacon} was assumed, which yields directly $u_i\in L^{{N+2}}(\Omega_T)$. 

In this paper, we are able to show classical solutions under the weaker $\delta$-smallness condition \eqref{deltacon1}
by combining regularity and bootstrap arguments in Bochner spaces with interpolation of Bochner spaces (see  Lemma \ref{LLInterpol}) 
and a further bootstrap based on a weak comparison argument (Lemma \ref{weakcomparisonbootstrap}).

For example, for $N=3$, our approach lowers the initially required regularity $u_i\in L^{2\Gamma}(\Omega_T)$ from 
the seemingly natural assumption $\Gamma = \frac{5}{2}$ 
(as assumed in Proposition \ref{propthree} from \cite{CDF}) down to $\Gamma>\frac{13}{10}$. Since global $L^2$-weak  solutions to \eqref{sys4} 
exist in all space dimensions, Theorem \ref{Theorem} leaves the gap $1<\Gamma\le  \frac{13}{10}$ in the case $N=3$, 
for which it remains an open problem if global weak solutions $u_i\in L^{2\Gamma}(\Omega_T)$ are also global classical solutions. This corresponds to systems \eqref{sys4}, for which 
$$
\delta  > 2\,(C_{\frac{a+b}2, 2\Gamma})^{-1}, \qquad\text{for all } \quad
\Gamma>\frac{N+2}{2}-\frac{2N}{N+2}.
$$
\end{Rem}
\medskip
\begin{Rem}
We remark that the assumption on the initial data $u_{i0}\in L^{\frac{3N}{4}}(\Omega)$ is not optimal, but chosen
for sake of the readability of the proof of Theorem \ref{Theorem}. In fact, the proof of Theorem \ref{Theorem} would work equally for $u_{i0}\in L^{\mu_0}(\Omega)$ with $\mu_0>\max\Bigl\{\nu,\frac{(\nu-1)N}{2}\Bigr\}$. 
\end{Rem}
\medskip
\begin{Rem}
We further remark that the uniform-in-time $L^{\infty}$-bound \eqref{Linftybound} follows from an interpolation argument between 
the exponential convergence to equilibrium of system \eqref{sys4} (see Proposition \ref{Convergence} below) 
and the fact that the proof of Theorem \ref{Theorem} only involves regularity constants, which grow not faster than 
polynomially in time. 
\end{Rem}
\medskip

\begin{Rem}
There is an alternative argument to derive the initially required regularity $u_i\in L^{2\Gamma}(\Omega_T)$
provided a suitable $\delta$-smallness condition of the diffusion coefficients $d_i$ of system \eqref{sys4}.  
System \eqref{sys4} implies, for instance that
\begin{equation*}
\pa_t (u_1+u_3) - d_1\Delta ( u_1+u_3)=  (d_3-d_1) \Delta u_3. 
\end{equation*} 
Then, by a duality estimate (see e.g. the proof of \cite[Lemma 3.4]{P2}), it follows directly for any $p\in (1,+\infty)$ and some  
$C=C(p,T)$ that 
$$ 
\|u_1+u_3\|_{L^p(\Omega_T)}\le C\left[1+|d_3-d_1|\|u_3\|_{L^p(\Omega_T)}\right].
$$
Thus, with $\|u_3\|_{L^p(\Omega_T)}\le \|u_1+u_3\|_{L^p(\Omega_T)}$, the required estimate for $\|u_3\|_{L^p(\Omega_T)}$ holds provided that $|d_3-d_1|$ is small enough.
\end{Rem}
\medskip

We emphasise that our proof of Theorem \ref{Theorem} only relies on the closeness assumption \eqref{deltacon1}, uniform-in-time $L^1$ a-priori estimates (which follows typically from mass conservation laws) 
and the quadratic order of the non-linearity of the reaction terms on the r.h.s. of \eqref{sys4}. 
Thus our results can be similarly applied to related mass-conserving reaction-diffusion systems with quadratic non-linearities. 
Moreover, one can also easily generalise the proof to systems with super-quadratic reaction terms provided the more stringent closeness condition \eqref{deltacon2}, see the following Corollary.

\begin{cor}  \label{Corollary}
Let $N\ge3$ and $\nu\ge2$ and consider the following non-linear reaction-diffusion systems:
\begin{equation}\label{sys4GEN}
 \begin{cases}
 \pa_t u_i - d_i\, \Delta_x u_i = f_i(u), &\qquad \text{on}\quad \Omega_T,\\
  n(x)\cd\nx u_i(t,x) = 0, &\qquad \text{on}\quad \partial\Omega_T,\\
  u_i(0,x) = u_{i0}(x)\ge0, &\qquad \text{on}\quad \Omega,
 \end{cases}
\end{equation}
where $u=(u_1,\ldots,u_k)\ge0$ denotes $k\in\N_+$ non-negative concentrations and for all $i\in\{1,\ldots,k\}$ the nonlinearities $f_i(u)$ are positivity-preserving, polynomials of order $\nu$, i.e. $f_i(u)\le C \sum_{i=1}^{k} u_i^{\nu}$ for a constant $C>0$ and satisfy conservation laws of the following form: For all $i\in\{1,\ldots,k\}$, there exists a constant, positive vector $0\le (a^i_j)_{j\in \{1,\ldots,k\}}\in\R^k$ with $a^i_i> 0$ such that $\sum_{j=1}^k a^i_j f_j(u)\leq0$ and thus
\begin{equation}\label{MGEN}
\forall t \ge 0: \quad\ a^i_i \|u_i(t,\cdot)\|_{L^1(\Omega)}  = a^i_i \iO u_i(t,x)\,dx \le \iO \sum_{j=1}^k a^i_j u_{j0}(x)\,dx \le M <+\infty, \qquad\forall \ i\in  \{1,\ldots,k\}.
\end{equation}

Assume that for $b:=\sup_{i= 1..k} \{d_i\}$, $a:=\inf_{i= 1..k} \{d_i\}>0$ and $\delta = b-a$ holds
\begin{equation}\label{deltacon2}
\delta  < 2\,(C_{\frac{a+b}2, \nu\Gamma})^{-1},\qquad\text{for}\quad
\Gamma>\frac{(\nu-1)N^2+4}{2 (N+2)},
\end{equation}
where $C_{\frac{a+b}2, 2\Gamma}$ is defined in Proposition \ref{propone}.
Assume initial data $u_{i0}\in L^{\frac{(2\nu-1)N}{4}}(\Omega)$ (which also implies with $\frac{(2\nu-1)N}{4}>2\Gamma$, assumption \eqref{deltacon2}, estimate \eqref{uapriori} of Proposition \ref{propone} and a theory of global, weak solutions analog to \cite{DFPV}, the existence of global, weak solutions $u_i\in L^{2\Gamma}(\Omega_T)$ to \eqref{sys4} for any $T>0$). 

Then, any global, weak solutions $u_i\in L^{2\Gamma}(\Omega_T)$ for any $T>0$ is a global, classical solutions to  \eqref{sys4}.
\end{cor}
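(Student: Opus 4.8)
The plan is to repeat, essentially verbatim, the proof of Theorem~\ref{Theorem}, making two substitutions: the quadratic reaction bound is replaced by the polynomial growth $f_i(u)\le C\sum_{j=1}^{k}u_j^{\nu}$ of order $\nu$, and the mass conservation law of \eqref{sys4} is replaced by the conservation structure imposed in \eqref{MGEN}. The latter enters only through its consequence, the uniform-in-time $L^1$ a-priori bounds $\|u_i(t,\cdot)\|_{L^1(\Omega)}\le M/a^i_i$ for all $t\ge 0$ (here $a^i_i>0$), which is precisely the input the proof of Theorem~\ref{Theorem} draws from mass conservation; the positivity-preserving assumption on the $f_i$ ensures $u_i\ge 0$, so these $L^1$ bounds are genuine.

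First I would fix the starting integrability. Proposition~\ref{propone}, applied with the exponent $\nu\Gamma$ appearing in \eqref{deltacon2} — admissible because $\delta<2(C_{\frac{a+b}2,\nu\Gamma})^{-1}$ — together with the data assumption $u_{i0}\in L^{(2\nu-1)N/4}(\Omega)$ and $\frac{(2\nu-1)N}{4}>2\Gamma$, and a theory of global weak solutions analogous to \cite{DFPV}, yields global weak solutions with enough integrability that $f_i(u)\in L^{\Gamma}(\Omega_T)$ for every $T>0$. For $\nu=2$ this is exactly the hypothesis $u_i\in L^{2\Gamma}(\Omega_T)$ of Theorem~\ref{Theorem}.

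Next comes the bootstrap, which is identical in form to that of Theorem~\ref{Theorem}. Feeding $f_i(u)\in L^{\Gamma}(\Omega_T)$ into the Bochner-space parabolic regularity estimates, interpolating by means of Lemma~\ref{LLInterpol}, and gaining additional integrability through the weak comparison step of Lemma~\ref{weakcomparisonbootstrap} — each of which uses only the $L^1$ bounds from \eqref{MGEN} and the order $\nu$ of the nonlinearity (via $f_i(u)\le C\sum u_j^\nu$, which reduces the comparison to a linear parabolic problem with a source controlled once the $u_j$ are integrable enough) — produces a strictly increasing sequence of exponents $p_0<p_1<p_2<\cdots$ with $u_i\in L^{p_m}(\Omega_T)$. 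The only modification relative to the quadratic case is the bookkeeping: at each step the source term is controlled in $L^{p_m/\nu}$ rather than $L^{p_m/2}$, and a short computation shows that the associated iteration map is strictly expanding, so that $p_m\to+\infty$, precisely when $\Gamma>\frac{(\nu-1)N^2+4}{2(N+2)}$. (For $\nu=2$ this threshold equals $\frac{(N+2)^2-4N}{2(N+2)}=\frac{N+2}{2}-\frac{2N}{N+2}$, i.e. $\frac{13}{10}$ when $N=3$, recovering the condition of Theorem~\ref{Theorem}.) After finitely many steps one reaches $f_i(u)\in L^{q}(\Omega_T)$ with $q>\frac{N+2}{2}$; the heat-kernel convolution argument recalled in Remark~\ref{rem1} (cf. Proposition~\ref{propthree}) then gives $u_i\in L^{\infty}(\Omega_T)$, and Schauder theory upgrades the $u_i$ to classical solutions on $[0,T]\times\bar\Omega$. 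As $T>0$ is arbitrary, the $u_i$ are global classical solutions. Note that, unlike Theorem~\ref{Theorem}, the corollary does not assert uniform-in-time bounds, since those rely on exponential convergence to equilibrium, which is not available for the general system \eqref{sys4GEN}.

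The main obstacle is the same as in Theorem~\ref{Theorem}, now carrying the extra parameter $\nu$: one must check that alternating the Bochner interpolation of Lemma~\ref{LLInterpol} with the weak comparison bootstrap of Lemma~\ref{weakcomparisonbootstrap} produces a strictly increasing and unbounded sequence of integrability exponents, and that the sharp dimension- and order-dependent threshold for this is exactly $\Gamma>\frac{(\nu-1)N^2+4}{2(N+2)}$. Propagating the factor $\nu$ (in place of $2$) through all the interpolation exponents is routine but must be tracked with care so that no integrability is lost; once the exponents are shown to escape to infinity, the passage to classical solutions is entirely standard.
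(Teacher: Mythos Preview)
Your proposal follows the paper's approach and is correct in spirit, but you have misidentified the mechanics of the bootstrap in one important respect. The iteration does \emph{not} alternate the Bochner interpolation Lemma~\ref{LLInterpol} with the weak comparison Lemma~\ref{weakcomparisonbootstrap}; rather, it alternates Lemma~\ref{LLInterpol} with the parabolic $L^p$ estimate of Lemma~\ref{unequal} (the $\mu<\gamma$ case), after one initial application of Lemma~\ref{equal}. Lemma~\ref{weakcomparisonbootstrap} is invoked only once, at the very end, after the Bochner iteration has produced $u_i\in L^{p_n,\infty}$ with $p_n\ge\frac{(2\nu-1)N}{4}$ --- at which point its hypothesis $\mu_0>\max\{\nu,\frac{(\nu-1)N}{2}\}$ is satisfied and it delivers $u_i\in L^{\infty,\infty}$ directly. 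Correspondingly, the target of the Bochner bootstrap is not $p_m\to+\infty$ but merely $p_m\ge\frac{(2\nu-1)N}{4}$.

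The ``short computation'' you allude to is the explicit recursion
\[
p_{n+1}=\frac{\tau_n}{\nu}=\frac{p_n}{\nu(1-\theta_n)}=\frac{Np_n}{\nu N-2p_n},
\qquad \theta_n=\frac{2p_n}{\nu N}\in(0,1)\ \text{for}\ p_n<\tfrac{(2\nu-1)N}{4},
\]
which has unique fixed point $p_\infty=\frac{(\nu-1)N}{2}$ and is strictly increasing above it. The threshold $\Gamma>\frac{(\nu-1)N^2+4}{2(N+2)}$ is exactly the condition $p_0=\frac{\Gamma(N+2)-2}{N}>\frac{(\nu-1)N}{2}$, i.e.\ that the starting exponent lies above the fixed point. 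With these corrections your outline matches the paper's proof.
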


\begin{Rem}
We remark that in contrast to Theorem \ref{Theorem}, Corollary \ref{Corollary} does not 
state a uniform-in-time bound analog to \eqref{Linftybound}. In fact, general systems \eqref{sys4GEN} will not feature an entropy functional like system \eqref{sys4}. Without the entropy method proving exponential convergence to equilibrium, our method only shows that global classical solutions to \eqref{sys4GEN} grow (at most) polynomially-in-time. More precisely, it is due to the uniform-in-time $L^1$-bounds \eqref{MGEN} that we are able to show global solutions with polynomially growing bounds for 
quadratic and super-quadratic systems.

For the existence of classical solutions to systems with sub-quadratic nonlinearities (i.e. $\nu<2$), we refer to Caputo and Vasseur \cite{CV} and the references therein. 
\end{Rem}

{\begin{Rem}\label{remrem}
Again, like in Remark \ref{rem1}, the novel range is $\Gamma\in(\frac{(\nu-1)N^2+4}{2 (N+2)},\frac{N+2}{2})$ because when $\Gamma\ge\frac{N+2}{2}$, then $L^{\infty}$-bounds and the existence of classical solutions follows from standard arguments. We remark that depending on the polynomial order $\nu$, Corollary \ref{Corollary} only constitutes an improvement under to following condition on the dimension $N$:
$$
\frac{(\nu-1)N^2+4}{2 (N+2)}<\frac{N+2}{2} \qquad\Longrightarrow\qquad
\nu<2+\frac{4}{N},
$$
which is false for $\nu$ large. Thus, our approach does not lead to improvements in the case of strongly super-quadratic nonlinearities. However, for $N=3$, we are able to lower the threshold, which is sufficient to show the existence of classical solutions, for systems with third order and slightly higher nonlinearities.   
\end{Rem}}

\bigskip
\underline{Notation:} Besides standard notations, we shall denote by 
$L^{p,q}(\Omega_T)$ the Bochner space with space-time norm:
$$
\|u\|_{p,q}=\left(\int_0^T\|u\|_{L^p(\Omega)}^{q}\,dt\right)^{1/q}.
$$
\bigskip

\underline{Idea of the proofs:}
For the convenience of the reader we present a short outline of the proofs of Theorem \ref{Theorem} and Corollary \ref{Corollary}:
First, we use an improved duality estimate as proven in \cite{CDF} in order to derive the starting a-priori estimate 
$u_i\in L^{2\Gamma}(\Omega_T)$ (or $u_i\in L^{\nu\Gamma}(\Omega_T)$ respectively). Then, we apply Sobolev's embedding theorem, interpolation with the uniform-in-time $L^1$-bound and other classical inequalities in order to derive  
$L^p$-estimates for some $p$ to be appropriately chosen. Next, we use these estimates as a starting point for a bootstrap scheme, which allows to improve the regularity of the solutions to $u_i\in L^{p_n,\infty}$, i.e. $u_i$ is $L^{\infty}$ in time with values $L^{p_n}$ in space. Then, after bootstrapping $p_n$ sufficiently large, the regularity $u_i\in L^{{p_n},\infty}$ allows to use the weak comparison Lemma \ref{weakcomparisonbootstrap}, which leads to global-in-time $L^{\infty}$ and, thus, classical solution to system \eqref{sys4}. Finally, uniform-in-time bounds for solutions of systems \eqref{sys4} follow from an interpolation argument with the exponential convergence to equilibrium.  
\medskip


\underline{Outline:} In Section \ref{Pre}, we shall first state basic properties of the systems \eqref{sys4} and recall previous existence results and methods. Then, in Section \ref{Global}, we shall first prove several Lemmas required for the proof of Theorem \ref{Theorem} and finally prove Corollary \ref{Corollary}. In the Appendix \ref{app}, we provide a proof of Lemma \ref{LLInterpol} for the convenience of the reader.

\section{Preliminaries}\label{Pre}

\subsubsection*{Non-negativity, mass conservation laws and equilibrium}
 
The chemical reaction terms on the r.h.s. of \eqref{sys4} satisfy the quasi-positivity property and thus ensure that solutions to \eqref{sys4} propagate 
the non-negativity of the initial data $u_{i0}\ge0$, i.e. for all $T>0$, we have
$$
u_i(t,x) \ge0, \qquad \text{on}\quad \Omega_T, \quad \text{for all} \quad 
i=1,2,3,4.
$$  
\medskip
 
Moreover, system \eqref{sys4} subject to homogeneous Neumann boundary conditions satisfies the following mass conservation laws:
\begin{equation}\label{cna}
\forall t\ge 0: \qquad M_{jk}:= \int_{\Omega} \left( u_j(t,x) + u_k(t,x) \right)\, dx = 
\int_{\Omega} \left( u_j(0,x) + u_k(0,x) \right)\, dx,
 \end{equation}
 where $j=\{1,2\}$ and $k=\{3,4\}$. Notice that
 only three of these four conservation laws are linearly independent. 
 
 The non-negativity of the solutions together with the mass conservation laws 
 \eqref{cna} provide natural uniform-in-time a-priori $L^1$-estimates for the 
 concentrations, i.e. 
 \begin{equation}\label{M}
 \sup_{t\ge 0} \|u_i(t,\cdot)\|_{L^1(\Omega)}\le M, \qquad \text{for all} \quad 
i=1,2,3,4,
 \end{equation} 
where $M$ denotes the total initial mass $M = M_{13} + M_{24} = M_{14} + M_{23}$.  
 \medskip
 
System \eqref{sys4} 
features a unique positive detail balance equilibrium.
Due to the homogeneous Neumann boundary conditions this equilibrium $(u_{i,\infty})_{i=1,..,4}$ consists of the unique positive constants balancing the reversible reaction 
$u_{1,\infty}\,u_{2,\infty}=u_{3,\infty}\,a_{u,\infty}$ and satisfying the conservation laws $u_{j,\infty} + u_{k,\infty} = M_{jk}$ for $(j,k)\in (\{1,2\},\{3,4\})$, 
that is:
\begin{equation}\label{ABCDEqu}
\begin{cases}
u_{1,\infty} = \frac{M_{13} M_{14}}{M}, & \quad u_{3,\infty} = M_{13}-\frac{M_{13} M_{14}}{M} = \frac{M_{13} M_{23}}{M}, \\
u_{2,\infty} = \frac{M_{23} M_{24}}{M}, & \quad u_{4,\infty} = M_{14}-\frac{M_{13} M_{14}}{M} = \frac{M_{14} M_{24}}{M}. 
     \end{cases}
\end{equation}

\subsubsection*{Duality estimates, entropy variables and global weak solutions}

The system \eqref{sys4} can also be rewritten in terms of the entropy density variables 
$z_i:=u_i \log(u_i) - u_i$ (compare with the entropy functional \eqref{ABCDXEntr} below). By introducing the sum $z:=\sum_{i=1}^4 z_i$, it holds (with $a$ and $b$ defined as in \eqref{delta}) that
\begin{equation}\label{sysentr}
\begin{cases}
\partial_t z - \Delta_x (A\,z) \le 0, &\qquad \text{on}\quad \Omega_T,\qquad  A(t,x):= \frac{\sum_{i=1}^4 d_i \,z_i}{\sum_{i=1}^4 z_i} \in [a,b],\\[1mm] 
n(x)\cd\nx z(t,x) = 0, &\qquad \text{on}\quad \partial\Omega_T,\\[1mm]
z(0,x)=\sum_{i=1}^4 z_{i0}(x), &\qquad \text{on}\quad \Omega,
\end{cases}
\end{equation} 
Then, by a duality argument (see e.g. \cite{DFPV,HM,Pdual} and the references therein), the parabolic problem 
\eqref{sysentr} satisfies for all $T>0$ and for all space dimensions $N\ge1$ the following a-priori estimate
\begin{equation}\label{dual}
\|z_i\|_{L^{2}(\Omega_T)} \le C(1+T)^{1/2} \,   \bigg\|\sum_{i=1}^4 u_{i0}(\log(u_{i0})-1)\bigg\|_{L^2(\Omega)},\qquad i=1,..,4,
\end{equation}
where $C$ is a constant independent of $T$, see \cite{CDF,DFPV}. Thus, given $(u_{i0})_{i=1,..,4}\in L^2 (\log L)^2(\Omega)$, we have $(u_i)_{i=1,..,4}\in L^2 (\log L)^2(\Omega_T)$ and the quadratic nonlinearities on the right hand side of \eqref{sys4} are uniformly integrable, which allows to prove the existence of global $L^2$-weak solutions in all space dimensions $N\ge1$, see \cite{DFPV}. Moreover, in 2D and in higher space dimension under the assumption of sufficiently strong $\delta$-smallness condition, the following duality Lemma allows to show global classical solutions:
\begin{prp}[\cite{CDF}]
  \label{propone} 
  Let $\Omega$ be a bounded domain of $\R^N$ with smooth
  (e.g. $C^{2+\alpha}$, $\alpha>0$) boundary $\partial\Omega$ and $T>0$.
 We consider a coefficient function $M :=
  M(t,x)$ satisfying for $p\in (2,\infty)$
 $$
    0 < a \leq M(t,x) \leq b < +\infty
    \qquad \text{ for } (t,x) \in \Omega_T.
$$
Then, any function $u$ satisfying
  \begin{equation}
    \label{eq:heat-variable-forward}
    \left\{
      \begin{aligned}
        &\partial_t u - \Delta_x (M u) \le 0, &&\quad \text{ on } \quad \Omega_T,
        \\
        &u(0,x) = u_0(x)\in
  L^p(\Omega), &&\quad \text{ for } \quad x \in \Omega,
        \\
        &\nabla_x u \cdot \nu(x) = 0, &&\quad \text{ on } \quad [0,T]
        \times \partial \Omega ,
      \end{aligned}
    \right.
  \end{equation}
and
  \begin{equation}
    \label{eq:const-small-q}
    C_{\frac{a+b}2,p'} \, \frac{b-a}{2} < 1,
  \end{equation}
  (with $p'<2$ denoting the H\"older conjugate of $p$)
  satisfies for all $T>0$ 
$$
    \|u\|_{L^p(\Omega_T)}
    \leq
    C_T \,
    \|u_0\|_{L^p(\Omega)}.
$$
Here, the constant $C_T$ depends polynomially on $T$ and 
  the constant $C_{m,q}>0$ in \eqref{eq:const-small-q} is defined for $m>0$, $p' \in (1,2)$
  as the best (that is, smallest) constant in the parabolic regularity
  estimate
$
    \| \Delta_x v\|_{L^q(\Omega_T)} \le C_{m,q}\,  \|f\|_{L^q(\Omega_T)},
$
  where  $v:[0,T] \times \Omega \to \R$ is the solution
  of the backward heat equation with homogeneous Neumann boundary conditions:
  \begin{equation}
    \label{quinze}
    \left\{
      \begin{aligned}
        &\partial_t v + m \,\Delta_x v = f, &&\quad \text{ on } \quad \Omega_T,
        \\
        &v(T,x) = 0, &&\quad \text{ for } \quad x \in \Omega,
        \\
        &\nabla_x v \cdot \nu (x) = 0, &&\quad \text{ on } \quad  [0,T]
        \times \partial \Omega.
      \end{aligned}
    \right.
  \end{equation}
  
Specifically for system \eqref{sys4}, we can rewrite the system in terms of 
$z =  u_1+u_2+u_3+u_4$ (it is not even necessary to consider entropy density variables) 
\begin{equation}\label{sysdual}
\begin{cases}
\partial_t z - \Delta_x (A\,z) = 0, &\qquad \text{on}\quad \Omega_T, \qquad A(t,x):= \frac{\sum_{i=1}^4 d_i \,u_i}{\sum_{i=1}^4 u_i} \in [a,b],\\
 n(x)\cd\nx z(t,x) = 0,&\qquad \text{on}\quad \partial\Omega_T, \\
z(0,x)=\sum_{i=1}^4 u_{i0}(x), &\qquad \text{on}\quad \Omega,
\end{cases}
\end{equation} 
where $a$ and $b$ are defined as in \eqref{delta} and obtain for all $T>0$ 
\begin{equation}\label{uapriori}
\forall i = 1,2,3,4: \qquad \|u_i\|_{L^p(\Omega_T)}    \leq C_T \, \|z(0,\cdot)\|_{L^p(\Omega)},
\qquad \text{provided that}\quad
C_{\frac{a+b}2,p'} \, \frac{b-a}{2} < 1.
\end{equation}
\end{prp}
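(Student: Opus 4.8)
The plan is to prove the estimate by the improved duality method, following \cite{CDF} (see also \cite{Pdual,DFPV,HM} for the underlying duality technique). Set $m:=\frac{a+b}{2}$; since $M(t,x)\in[a,b]$ on $\Omega_T$, this is the optimal constant shift, for which $\|M-m\|_{L^\infty(\Omega_T)}\le\frac{b-a}{2}$. Let $f\in L^{p'}(\Omega_T)$ be arbitrary with $f\ge0$ and $\|f\|_{L^{p'}(\Omega_T)}\le1$ (here $p'\in(1,2)$ is the H\"older conjugate of $p$), and let $v$ be the solution of the backward heat equation $\partial_t v+m\,\Delta_x v=-f$ on $\Omega_T$ with $v(T,\cdot)=0$ and $\nabla_x v\cdot\nu=0$ on $\partial\Omega_T$. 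Reversing time, $v$ solves a forward heat equation with non-negative source and zero initial datum, so $v\ge0$ by the maximum principle; by the very definition of the constant $C_{m,p'}$ in Proposition~\ref{propone} (applied with source $-f$) one has $\|\Delta_x v\|_{L^{p'}(\Omega_T)}\le C_{m,p'}\|f\|_{L^{p'}(\Omega_T)}$; and, by Duhamel's formula together with the $L^{p'}(\Omega)$-contractivity of the Neumann heat semigroup and H\"older's inequality in time, $\|v(0,\cdot)\|_{L^{p'}(\Omega)}\le T^{1/p}\|f\|_{L^{p'}(\Omega_T)}$.

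Next I would test the differential inequality $\partial_t u-\Delta_x(Mu)\le0$ against the non-negative function $v$. Integrating by parts in time and twice in space — the boundary terms vanishing because of the Neumann conditions (and, in the application to \eqref{sys4}, because $\nabla_x(Az)\cdot n=\sum_{i}d_i\,\nabla_x u_i\cdot n=0$) — using $v(T,\cdot)=0$, and splitting $M\,u\,\Delta_x v=m\,u\,\Delta_x v+(M-m)\,u\,\Delta_x v$ together with $\partial_t v+m\,\Delta_x v=-f$, one obtains
\[
\iOT u\,f\ \le\ \iO u_0\,v(0,\cdot)\ +\ \iOT (M-m)\,u\,\Delta_x v\ \le\ T^{1/p}\,\|u_0\|_{L^p(\Omega)}\ +\ \frac{b-a}{2}\,C_{m,p'}\,\|u\|_{L^p(\Omega_T)},
\]
where the last inequality uses H\"older's inequality, $|M-m|\le\frac{b-a}{2}$, the two estimates for $v$ above and $\|f\|_{L^{p'}(\Omega_T)}\le1$. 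Since $u\ge0$ (as is the case for $u=z=\sum_i u_i$ in the application to \eqref{sys4}), taking the supremum over all admissible $f$ and using the duality identity $\|u\|_{L^p(\Omega_T)}=\sup\{\iOT uf:\ f\ge0,\ \|f\|_{L^{p'}(\Omega_T)}\le1\}$ gives
\[
\|u\|_{L^p(\Omega_T)}\ \le\ T^{1/p}\,\|u_0\|_{L^p(\Omega)}\ +\ \frac{b-a}{2}\,C_{\frac{a+b}{2},p'}\,\|u\|_{L^p(\Omega_T)}.
\]
The smallness assumption \eqref{eq:const-small-q} guarantees $\frac{b-a}{2}C_{\frac{a+b}{2},p'}<1$, so the last term is absorbed into the left-hand side, yielding $\|u\|_{L^p(\Omega_T)}\le C_T\|u_0\|_{L^p(\Omega)}$ with $C_T=T^{1/p}\bigl(1-\frac{b-a}{2}C_{\frac{a+b}{2},p'}\bigr)^{-1}$, which grows polynomially (in fact sublinearly) in $T$. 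Finally, \eqref{uapriori} is immediate: take $M=A=\frac{\sum_i d_i u_i}{\sum_i u_i}$, which is a convex combination of the $d_i$ and hence lies in $[a,b]$, and $u=z=\sum_i u_i$, which satisfies $\partial_t z-\Delta_x(Az)=0$ because the four reaction terms of \eqref{sys4} cancel; then use $0\le u_i\le z$.

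The main obstacle is the rigorous justification of the duality step: a priori $u$ is only known to lie in $L^q(\Omega_T)$ for some exponent $q\le p$ (e.g.\ $q=2$ from the basic weak-solution theory of \cite{DFPV}), so neither the identity $\|u\|_{L^p(\Omega_T)}=\sup_f\iOT uf$ nor the absorption of $\frac{b-a}{2}C_{\frac{a+b}{2},p'}\|u\|_{L^p(\Omega_T)}$ can be invoked directly. This is circumvented by running the whole argument on a truncation $u_n:=\min\{u,n\}$ (or on a regularised approximation of the problem), which yields a bound uniform in $n$, and then letting $n\to\infty$ by monotone/dominated convergence. A secondary, routine point is to carry out the integration by parts within the weak formulation, where $v$ belongs only to the parabolic Sobolev space $W^{2,1}_{p'}(\Omega_T)$, so that $\Delta_x v\in L^{p'}(\Omega_T)$ pairs with $u\in L^p(\Omega_T)$; here one also uses the elementary fact that the Neumann heat semigroup is a contraction on $L^q(\Omega)$ for every $q\in[1,\infty]$, which controls $\|v(0,\cdot)\|_{L^{p'}(\Omega)}$.
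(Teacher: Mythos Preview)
Your proof is correct and follows precisely the improved duality method of \cite{CDF}; since the paper does not give its own proof of Proposition~\ref{propone} but merely cites \cite{CDF}, your argument is in fact the intended one. One minor remark: the general statement as written does not assume $u\ge0$, and you (correctly) use non-negativity both to pair the differential \emph{inequality} with the non-negative test function $v$ and to recover $\|u\|_{L^p}$ by duality over $f\ge0$; this is fine for the application to $z=\sum_i u_i\ge0$, which is all the paper needs.
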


\subsubsection*{Entropy functional, entropy method and exponential convergence to equilibrium}

The detail balance structure of system \eqref{sys4} ensures also the following non-negative entropy 
functional $E((u_i)_{i=1,..,4})$ and the associated entropy dissipation 
functional
$D((u_i)_{i=1,..,4})=-\frac{d}{dt}E((u_i)_{i=1,..,4})$:
\begin{align}
E((u_i)_{i=1,..,4})(t) =&\ \sum\limits_{i=1}^{4} \iO \Big(u_i(t,x)\log(u_i(t,x))-u_i(t,x)+1\Big)\,dx,\label{ABCDXEntr}\\
D((u_i)_{i=1,..,4})(t) =&\ \sum\limits_{i=1}^4 \iO 4 \,d_i\,|\nabla_x \sqrt{u_i(t,x)}|^2\,dx 
+ \iO (u_1\,u_2 - u_3\,u_4) \log \left(\frac{u_1\,u_2}{u_3\,u_4}\right)(t,x)\,dx.
\label{EntrDiss}
\end{align}
It is easy to verify that the following entropy dissipation law holds
(for sufficiently regular solutions $(u_i)_{i=1,..,4}$ of (\ref{sys4})) 
for all $t\ge 0$
\begin{equation*} \label{entlaw}
E((u_i)_{i=1,..,4})(t) + \int_0^t D((u_i)_{i=1,..,4})(s)\, ds = E((u_{i0})_{i=1,..,4}).
\end{equation*}


In \cite{DF3}, exponential 
convergence in $L^1$ towards the unique constant equilibrium \eqref{ABCDEqu} (with explicitly computable rates) was shown for global $L^2$-weak solutions in all space dimensions $N$.
The proof of this statement was based on the so called entropy method,  
where a quantitative entropy entropy-dissipation estimate of the form 
\begin{equation}\label{EED}
D((u_i)_{i=1,..,4}) \ge C \left [ E((u_i)_{i=1,..,4}) - E((u_{i,\infty})_{i=1,..,4})\right]
\end{equation}
with an explicitly computable constant $C$ was established, which uses only natural a-priori bounds of the system 
and thus significantly improved the results of \cite{DF2}. 

The following Proposition \ref{Convergence} recalls the corresponding results of  \cite{DF3}:


\begin{prp}[Exponential convergence to the equilibrium, \cite{DF3}] \label{Convergence}\hfill\\
Let $\Omega$ be a bounded domain with sufficiently smooth boundary such that Poincar\'e's and  Sobolev's inequalities and thus the Logarithmic Sobolev inequality
hold. 
Let $(d_i)_{i=1,..,4}>0$ be positive diffusion coefficients. Let
the 
initial data $(u_{i,0})_{i=1,..,4}$ be nonnegative functions of $L^{2}\,(\log L)^2(\Omega)$ with positive masses $(M_{jk})_{(j,k)\in (\{1,2\},\{3,4\})}>0$ (see 
\eqref{cna}). 

Then, any global solution $(u_i)_{i=1,..,4}$ of \eqref{sys4}, which satisfies the entropy dissipation law 
\eqref{entlaw} (this is true for any weak or classical solutions as shown to exist in \cite{CDF,DFPV}) decays exponentially towards the positive equilibrium state 
$(u_{i,\infty})_{i=1,..,4}>0$ defined by \eqref{ABCDEqu}:
\begin{equation*}
\sum_{i=1}^{4}\| u_i(t,\cdot) - u_{i,\infty}\|_{L^1(\Omega)}^2  
\le C_1 \Big(E((u_{i,0})_{i=1,..,4}) - E((u_{i,\infty})_{i=1,..,4})\Big)\,e^{-C_2\,t}, 
\end{equation*}
for all $t\ge0$ and for constants $C_1$ and $C_2$, which can be explicitly computed. 
\end{prp}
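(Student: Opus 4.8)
The plan is to prove Proposition~\ref{Convergence} by the entropy method, following \cite{DF3}. The decisive ingredient is the entropy entropy-dissipation (EED) estimate \eqref{EED}: there exists an explicitly computable constant $C>0$, depending only on the diffusion coefficients $(d_i)_{i=1,..,4}$, the masses $(M_{jk})$, $|\Omega|$ and the constants in Poincar\'e's and the logarithmic Sobolev inequality on $\Omega$ --- but crucially \emph{not} on time --- such that
\[
D((u_i)_{i=1,..,4}) \;\ge\; C\bigl[E((u_i)_{i=1,..,4}) - E((u_{i,\infty})_{i=1,..,4})\bigr]
\]
holds for every quadruple of nonnegative functions obeying the mass constraints \eqref{cna} with the prescribed masses. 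I take \eqref{EED} as the technical heart and treat everything else as a routine consequence.

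Granting the EED estimate, the conclusion is immediate. Set $G(t):=E((u_i)(t,\cdot)) - E((u_{i,\infty})_{i=1,..,4})\ge 0$; by the entropy dissipation law \eqref{entlaw} one has $G(t)+\int_s^t D((u_i)(r,\cdot))\,dr = G(s)$ for $0\le s\le t$, so $G$ is non-increasing and, inserting \eqref{EED} together with monotonicity, $G(t)\le G(s) - C\int_s^t G(r)\,dr \le G(s)-C(t-s)G(t)$; iterating over intervals of length $1/C$ gives $G(t)\le 2\,G(0)\,e^{-(C\log 2)\,t}$ (and when $G$ is absolutely continuous --- e.g. for classical solutions --- one simply integrates $G'=-D\le -CG$ to obtain $G(t)\le G(0)\,e^{-Ct}$). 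Finally, a Csisz\'ar--Kullback--Pinsker-type inequality, adapted to the present constrained setting using the conserved masses \eqref{cna}, the uniform $L^1$-bound \eqref{M} and the strict positivity of the equilibrium \eqref{ABCDEqu}, yields $\sum_{i=1}^4\|u_i(t,\cdot)-u_{i,\infty}\|_{L^1(\Omega)}^2 \le C_1\,G(t)$; combining the two estimates produces the claimed exponential decay with $C_2=C$ (up to the logarithmic factor in the weak case).

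The main obstacle is therefore the proof of \eqref{EED}, which I would carry out by the standard two-scale (``additivity'') argument. Writing $\overline{u_i}:=\iO u_i(t,x)\,dx$ for the spatial average, one splits
\[
G \;=\; \sum_{i=1}^4 \iO\Bigl(u_i\log\tfrac{u_i}{\overline{u_i}} - u_i + \overline{u_i}\Bigr)dx \;+\; \sum_{i=1}^4\Bigl(\overline{u_i}\log\tfrac{\overline{u_i}}{u_{i,\infty}} - \overline{u_i} + u_{i,\infty}\Bigr),
\]
the first sum measuring spatial inhomogeneity and the second being a finite-dimensional relative entropy of the averages against the equilibrium, constrained by $\overline{u_j}+\overline{u_k}=M_{jk}$. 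The inhomogeneous part is dominated by the diffusive dissipation $\sum_{i}4d_i\iO|\nabla_x\sqrt{u_i}|^2\,dx$ through the logarithmic Sobolev inequality applied to $\sqrt{u_i}$ (on the normalised domain $|\Omega|=1$). For the reaction part, one bounds the reaction-dissipation integrand from below via $(X-Y)\log\tfrac{X}{Y}\ge 4(\sqrt X-\sqrt Y)^2$ with $X=u_1u_2$, $Y=u_3u_4$, then uses Poincar\'e's inequality and the already-controlled diffusive dissipation to pass from the pointwise $\sqrt{u_i}$ to the averages $\overline{u_i}$, and finally invokes a finite-dimensional EED inequality on the bounded mass region, whose constant is positive and explicit by a convexity/compactness argument exploiting $u_{i,\infty}>0$. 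The genuinely delicate point --- and the reason \cite{DF3} improves on \cite{DF2} --- is to assemble these three bounds while tracking all constants so that the resulting $C$ neither vanishes nor blows up; since that bookkeeping is carried out in full in \cite{DF3}, I would cite it rather than reproduce it.
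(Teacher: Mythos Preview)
The paper does not prove Proposition~\ref{Convergence} at all: it is stated as a recall of the main result of \cite{DF3} and is simply cited without argument. Your sketch therefore goes well beyond what the present paper does. That said, your outline is a faithful summary of the entropy method as carried out in \cite{DF3} --- the EED estimate \eqref{EED} combined with a Gronwall-type argument on the relative entropy, followed by a Csisz\'ar--Kullback--Pinsker inequality --- and is correct in structure; there is nothing to compare against in this paper itself.
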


The following Proposition \ref{propthree} from \cite{CDF} shows the consequences of the Propositions \ref{propone} and \ref{Convergence}
specific to the system \eqref{sys4}.

\begin{prp}[\cite{CDF}]\label{propthree}
Let $\Omega$ be a bounded domain of $\R^N$ with smooth
(e.g. $C^{2+\alpha}$, $\alpha>0$) boundary $\partial\Omega$.  For
all $i=1,\ldots,4$ assume positive diffusion coefficients $d_i>0$ such
that $0<a = \inf \{d_i\}_{i=1..4}$, $b=\sup \{d_i\}_{i=1..4}$ and
nonnegative initial data $u_{i0} \in L^{\infty}(\Omega)$ ($i=1..4$). 

Then, if 
\begin{equation}\label{deltacon}
\delta= b-a < 2\,(C_{\frac{a+b}2, \frac{N + 2}{N}})^{-1},
\end{equation}
 there
exists a nonnegative weak solution $u_i \in L^{\infty}([0, +\infty)
\times \Omega)$ to system
\eqref{sys4} subject to the initial data $u_{i0}$. These $L^{\infty}$ solutions are in fact classical solutions, which follows from  standard parabolic regularity. 

Moreover,  there exist two constants $C_1, C_2>0$ such
that
  \begin{equation}\label{quatrebis}
    \forall t \ge 0: \qquad \qquad
    \sum_{i=1}^4 \| u_i(t,\cdot) - u_{i\infty}\|_{L^{\infty}(\Omega)}
    \le C_1\, e^{- C_2\, t}.
  \end{equation}
Here, the norm $\|u_i\|_{L^{\infty}([0, +\infty) \times\Omega)}$ and
the constants $C_1, C_2$ can be explicitly bounded in
terms of the domain $\Omega$, space dimension $N$, the initial norms
$\|u_{i0}\|_{L^{\infty}(\Omega)}$  and the
diffusion coefficients $d_i$, $(i=1..4)$. 
\end{prp}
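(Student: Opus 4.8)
Following \cite{CDF}, the plan is to bootstrap the improved duality estimate of Proposition~\ref{propone} through standard parabolic regularity so as to obtain $L^\infty$-bounds on every finite cylinder $\Omega_T$, and then to combine these with the exponential convergence of Proposition~\ref{Convergence} to render them uniform in time. First I would note that, since $u_{i0}\in L^\infty(\Omega)\subset L^2(\log L)^2(\Omega)$, the theory of \cite{DFPV} provides a global, non-negative $L^2$-weak solution $(u_i)_{i=1,..,4}$ of \eqref{sys4}, whose non-negativity is preserved by the quasi-positivity of the reaction terms. Applying Proposition~\ref{propone} to $z=u_1+u_2+u_3+u_4\ge0$, which solves $\pa_t z-\Delta_x(A\,z)=0$ with a coefficient $A(t,x)\in[a,b]$, the smallness assumption \eqref{deltacon} guarantees (via the a-priori estimate \eqref{uapriori}) an exponent $p$ that is subcritical for the quadratic source --- in fact $p\ge N+2$, cf.\ Remark~\ref{rem1} --- so that $\|u_i\|_{L^p(\Omega_T)}\le C_T\,\|z(0,\cdot)\|_{L^p(\Omega)}\le C_T\sum_j\|u_{j0}\|_{L^\infty(\Omega)}$ with $C_T$ depending only polynomially on $T$. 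Hence, possibly after one further bootstrap step to absorb the borderline Sobolev exponent, the quadratic right-hand sides satisfy $f_i(u)\in L^{\frac{N+2}{2}+\epsilon}(\Omega_T)$ for some $\epsilon>0$, again with norm polynomial in $T$.

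Next I would write each equation as $\pa_t u_i-d_i\,\Delta_x u_i=f_i(u)$ with $u_i(0,\cdot)=u_{i0}\in L^\infty(\Omega)$ and use the Duhamel representation together with the heat-kernel estimate (the heat kernel lying in $L^{\frac{N+2}{N}-\epsilon}_{t,x}$, plus Young's inequality for convolutions) to deduce $u_i\in L^\infty(\Omega_T)$ with $\|u_i\|_{L^\infty(\Omega_T)}\le C\,(1+T)^{\kappa}$ for suitable $C,\kappa>0$. Once $u_i\in L^\infty(\Omega_T)$ one has $f_i(u)\in L^\infty(\Omega_T)$, hence $u_i\in W^{2,1}_q([\tau,T]\times\bar\Omega)$ for every $q<\infty$ and $\tau>0$ by parabolic $L^q$-theory, so $f_i(u)$ is Hölder continuous and Schauder theory promotes $u_i$ to a classical solution of \eqref{sys4} on $(0,+\infty)\times\bar\Omega$. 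For the uniform-in-time bound and \eqref{quatrebis}, I would invoke Proposition~\ref{Convergence}, which gives $\sum_i\|u_i(t,\cdot)-u_{i,\infty}\|_{L^1(\Omega)}\le C_1 e^{-C_2 t}$, and interpolate it against the polynomial-in-time $L^\infty$-bound: for every finite $q>1$ with $\theta=1-\tfrac1q$,
\[
\|u_i(s,\cdot)-u_{i,\infty}\|_{L^q(\Omega)}\le\|u_i(s,\cdot)-u_{i,\infty}\|_{L^1(\Omega)}^{1-\theta}\,\|u_i(s,\cdot)-u_{i,\infty}\|_{L^\infty(\Omega)}^{\theta}\le C_q\,(1+s)^{\kappa\theta}\,e^{-\frac{C_2}{q}\,s},
\]
so that $\|u_i(s,\cdot)-u_{i,\infty}\|_{L^q(\Omega)}$ decays exponentially for every finite $q$ (the polynomial prefactor being absorbed into a slightly smaller rate). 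A final parabolic smoothing step on each time slab $[t-1,t]$, applied to $w_i:=u_i-u_{i,\infty}$ --- whose source is bounded by $C(|w|+|w|^2)$, since $f_i((u_{j,\infty})_j)=0$ at the detailed-balance equilibrium and $u$ is already bounded --- upgrades this to $\|w_i(t,\cdot)\|_{L^\infty(\Omega)}\le C_1 e^{-C_2 t}$, which is \eqref{quatrebis}; in particular $\|u_i(t,\cdot)\|_{L^\infty(\Omega)}\le u_{i,\infty}+C_1$ for $t\ge 1$, and together with the finite $L^\infty$-bound on $[0,1]$ this yields $u_i\in L^\infty([0,+\infty)\times\Omega)$.

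The step I expect to be the main obstacle is the duality estimate itself: the condition \eqref{deltacon} is exactly the borderline smallness that makes it land in an $L^p(\Omega_T)$ with $p$ strictly above the non-improving fixed point $p=\frac{N+2}{2}$ of the parabolic bootstrap, since any weaker hypothesis on $\delta$ would give only $u_i\in L^{\frac{N+2}{2}}(\Omega_T)$, from which the quadratic right-hand side is merely borderline and no gain is possible. A secondary, only apparent, difficulty is the mild circularity in the last step (using a time-growing $L^\infty$-bound to derive a uniform one), which is harmless precisely because every constant produced by the duality and parabolic-regularity steps is explicitly polynomial in $T$, so that the exponential $L^1$-decay dominates it under interpolation, and because the final passage from exponential $L^q$- to exponential $L^\infty$-decay must be carried out after linearising the reaction terms at the equilibrium.
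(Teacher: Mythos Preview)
The paper does not supply its own proof of Proposition~\ref{propthree}: the result is quoted from \cite{CDF} in the Preliminaries section and stated without argument. Your reconstruction is nonetheless correct and matches the strategy the present paper sketches in two places. Remark~\ref{rem1} describes precisely the first half of your outline (the duality estimate under \eqref{deltacon} yields $u_i\in L^{N+2}(\Omega_T)$, whence the quadratic sources lie in $L^{(N+2)/2}(\Omega_T)$ and standard parabolic regularity gives $u_i\in L^\infty(\Omega_T)$ with constants polynomial in $T$), while the final paragraph of the proof of Theorem~\ref{Theorem} carries out the second half (interpolating the exponential $L^1$-decay of Proposition~\ref{Convergence} against polynomially growing higher norms to obtain a uniform-in-time bound).

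The only visible difference concerns how the exponential $L^\infty$-decay \eqref{quatrebis} is obtained. In the proof of Theorem~\ref{Theorem} the paper first upgrades the polynomial $L^\infty$-bound to polynomial $H^k$-bounds for $k>N/2$ via parabolic regularity and then applies Gagliardo--Nirenberg between $L^1$ and $H^k$ to reach $L^\infty$ in a single step. You instead interpolate between $L^1$ and $L^\infty$ to get exponential decay in every finite $L^q$, and then run one more parabolic-smoothing pass on unit time slabs after linearising the reaction at the equilibrium. Both routes are valid; the paper's avoids the second smoothing step, while yours avoids appealing to control of spatial derivatives.
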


\section{Existence of Global Classical Solutions}\label{Global}
 
In this Section, we first prove the existence of global classical solutions of the system \eqref{sys4}
under the $\delta$-smallness assumption 
\eqref{deltacon1}, where $\delta$ (as defined in \eqref{delta}) measures the maximal distance of the diffusion coefficients
of systems \eqref{sys4} on a domain $\Omega\subset\R^N$ for a given space dimension $N\ge 3$ 
(recall that global classical solutions of system \eqref{sys4} are known for $N=1,2$). 
As a consequence of the $\delta$-smallness assumption 
\eqref{deltacon1}, estimate \eqref{uapriori} of Proposition \ref{propone} provides an $u_i\in L^{2\Gamma}(\Omega_T)$-estimate for the concentration $u_i$ of \eqref{sys4}. In particular, we are interested in the range 
\begin{equation}
\label{pbounds}
\frac{N+2}{2}-\frac{2N}{N+2}<\Gamma< \frac{N+2}{2},
\end{equation}
since for $\Gamma\ge N+2$ and the considered quadratic nonlinearities $f \le |u_1\, u_2 - u_3\,u_4| \in L^{\Gamma}(\Omega_T)$, a standard parabolic regularity bootstrap argument for the heat equation with Neumann boundary conditions, i.e.
\begin{equation}
    \label{eq:heat-Neumann}
    \left\{
      \begin{aligned}
        &\partial_t u - d \,\Delta_x u = f &&\quad \text{ on } \quad \Omega_T,
        \\
        &u(0,x) = u_0(x) &&\quad \text{ for } \quad x \in \Omega,
        \\
        &\nabla u \cdot \nu(x) = 0 &&\quad \text{ on } \quad [0,T]
        \times \partial \Omega,
      \end{aligned}
    \right.
  \end{equation}
implies  for a right-hand-side $f\in L^\frac{N+2}{2}(\Omega_T)$ 
that the solution satisfies the following $L^{\infty}$ estimates 
$$
\|u\|_{L^{\infty}(\Omega_T)}\le C_{T},
$$ 
where $C_T$ grows at most polynomially w.r.t. $T$, (see \cite{CDF} for the polynomial dependence on $T$). In particular, 
the range $\Gamma \ge \frac{N+2}{2}$ was considered in Proposition \ref{propthree}, where $\delta$ was assumed to satisfy the more stringent $\delta$-smallness assumption \eqref{deltacon}, which yields $f\in L^\frac{N+2}{2}(\Omega_T)$. 
\bigskip 
 
 Thus, we consider here the $\delta$-smallness assumption 
\eqref{deltacon1} and estimate \eqref{uapriori} of Proposition \ref{propone} yields a-priori estimate of the form  
\begin{equation}\label{Lpg2}
\|u_i\|_{L^{2\Gamma,2\Gamma}(\Omega_T)}\leq C_T \, \|u_{i0}\|_{L^{2\Gamma}(\Omega)},\qquad \text{for some} \quad \Gamma\in\Bigl(\frac{N+2}{2}-\frac{2N}{N+2},\frac{N+2}{2}\Bigr),
\end{equation}
where $C_T$ is a constant depending polynomially on the time $T$ and $\frac{N+2}{2}-\frac{2N}{N+2}\ge\frac{13}{10}$ for $N\ge3$. 
\bigskip


As a first step in proving Theorem \ref{Theorem}, we are faced with the fact that duality estimates like \eqref{Lpg2} 
address spaces $L^{ 2\Gamma,2\Gamma}(\Omega_T)$, which feature equal time- and space-integrability.
However, as we shall see in the following, bootstrapping the problem \eqref{eq:heat-Neumann} naturally leads to 
Bochner spaces with higher time- than space-integrability. 

In this Section, we present a bootstrap, which allows to conclude from $u_i\in L^{2\Gamma,2\Gamma}$ to  $u_i\in L^{\frac{3N}{4},\infty}$.
Furthermore, we shall take care to ensure that all involved constants grow at most polynomially in $T$.
\smallskip
\begin{Lem}\label{Lemma31}
Let $N\ge3$ and $u$ satisfy the problem \eqref{eq:heat-Neumann}, the following uniform-in-time $L^1$-bound \eqref{MM}
\begin{equation}\label{MM}
 \sup_{t\ge 0} \|u(t,\cdot)\|_{L^1(\Omega)}\le M 
\end{equation}
(for instance as a consequence of a suitable mass conservation law) and consider initial data $u_0\in L^p(\Omega)$ 
for a $p>1$ to be chosen.

Then, (by using Sobolev's embedding theorem, the uniform-in-time bound \eqref{MM} as well as H\"older's and Young's inequality) $u$ satisfies the following $L^p$ estimate for all $T>0$:
\begin{equation}\label{h2}
  \|u(T)\|_{p}^{p} 
 +  \tilde{C} \|u\|_{q,p}^p
  \le 
 p\|f\|_{\mu,\gamma}
\|u\|_{\mu'(p-1),\gamma'(p-1)}^{p-1}
+  \|u_0\|_{p}^{p} 
 +C'T.\end{equation}
where $q=q(p,N):=p\frac{N}{N-2}>p$ for $N\ge3$ and 
the constants $C'$ and $\tilde{C}$ only depend on $p$, $M$, $d$, $N$  and $C_s$, which is the constant from the Sobolev's embedding theorem. Moreover, $\gamma$, $\gamma'$, $\mu$ and $\mu'$ are H\"older conjugates to be chosen.

Furthermore, by using once more the uniform-in-time bound \eqref{MM} and provided that $1\le \mu'(p-1)\le q$ holds, $u$ also satisfies
 \begin{equation}\label{h2new}
  \|u(T)\|_{p}^{p} 
 +  \tilde{C} \|u\|_{q,p}^p
  \le 
 C(p,M)\|f\|_{\mu,\gamma}
\left(\int_0^T\|u\|_q^{q\frac{\gamma'}{\mu'}\frac{\mu'(p-1)-1}{(q-1)}}dt\right)^{1/\gamma'}
+  \|u_0\|_{p}^{p} 
 +C'T.	
\end{equation}
\end{Lem}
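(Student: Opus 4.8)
The plan is to run the standard weighted $L^p$ energy estimate for the heat equation \eqref{eq:heat-Neumann}, then to upgrade the resulting Dirichlet-energy term into a space-time $L^{q,p}$ norm via Sobolev's embedding, absorbing the unavoidable lower-order term with the uniform $L^1$-bound \eqref{MM}. Concretely, I would first test \eqref{eq:heat-Neumann} with $p\,u^{p-1}$ (legitimate for the non-negative, sufficiently regular $u$ of interest; in general by a routine approximation argument), integrate over $\Omega$, and integrate the diffusion term by parts using the homogeneous Neumann condition. With $\nabla u^{p/2}=\tfrac p2 u^{p/2-1}\nabla u$ this gives the identity
\[
\frac{d}{dt}\|u(t)\|_{p}^{p}+\frac{4d(p-1)}{p}\bigl\|\nabla u^{p/2}(t)\bigr\|_{L^2(\Omega)}^2=p\iO u^{p-1}f\,dx .
\]
Integrating in $t$ over $[0,T]$ and estimating the right-hand side by H\"older's inequality in space (exponents $\mu,\mu'$) and then in time (exponents $\gamma,\gamma'$) yields
\[
\|u(T)\|_{p}^{p}+\frac{4d(p-1)}{p}\int_0^T\bigl\|\nabla u^{p/2}\bigr\|_{L^2(\Omega)}^2\,dt\le\|u_0\|_{p}^{p}+p\,\|f\|_{\mu,\gamma}\,\|u\|_{\mu'(p-1),\gamma'(p-1)}^{p-1}.
\]

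To produce \eqref{h2}, I would next apply the Sobolev embedding $H^1(\Omega)\hookrightarrow L^{2N/(N-2)}(\Omega)$ (valid for $N\ge3$) to $u^{p/2}$. Since $\bigl\|u^{p/2}\bigr\|_{L^{2N/(N-2)}(\Omega)}^2=\|u\|_{L^q(\Omega)}^{p}$ with $q=q(p,N)=p\frac{N}{N-2}$ and $\bigl\|u^{p/2}\bigr\|_{L^2(\Omega)}^2=\|u\|_{p}^{p}$, this bounds $\bigl\|\nabla u^{p/2}\bigr\|_{L^2(\Omega)}^2$ from below by $C_s^{-2}\|u\|_{L^q(\Omega)}^{p}-\|u\|_{p}^{p}$. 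The lower-order term $\|u\|_p^p$ is then absorbed: since $1<p<q$, interpolating between $L^1$ and $L^q$ and using \eqref{MM} gives $\|u\|_p^p\le M^{p(1-\theta)}\|u\|_{L^q(\Omega)}^{p\theta}$ with $\theta=\frac{q(p-1)}{p(q-1)}<1$, and Young's inequality splits the right-hand side into $\eps\|u\|_{L^q(\Omega)}^p+C_\eps M^p$. Integrating in time and choosing $\eps$ small enough (depending on $p,d,C_s$) so that the $\eps\|u\|_{L^q}^p$-contribution is swallowed by the Dirichlet-energy term yields exactly \eqref{h2}, with $\tilde C$ determined by $\frac{4d(p-1)}{p}$ and the Sobolev constant, and the remainder $C'T$ coming from the time-integral of the constant $C_\eps M^p$; hence $\tilde C$ and $C'$ depend only on $p,M,d,N,C_s$.

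Finally, \eqref{h2new} follows from one more interpolation of the factor $\|u\|_{\mu'(p-1),\gamma'(p-1)}^{p-1}$. Under the hypothesis $1\le\mu'(p-1)\le q$, interpolating $\|u\|_{L^{\mu'(p-1)}(\Omega)}$ between $L^1$ (bounded by $M$) and $L^q$ gives $\|u\|_{L^{\mu'(p-1)}(\Omega)}\le M^{1-\sigma}\|u\|_{L^q(\Omega)}^{\sigma}$ with $\sigma=\frac{q(\mu'(p-1)-1)}{\mu'(p-1)(q-1)}$. Raising to the power $\gamma'(p-1)$, integrating in $t$, taking the $\gamma'$-th root, and using that the factor $p-1$ cancels in $\frac{\gamma'(p-1)}{\mu'(p-1)}=\frac{\gamma'}{\mu'}$ so that $\sigma\gamma'(p-1)=q\frac{\gamma'}{\mu'}\frac{\mu'(p-1)-1}{q-1}$, this rewrites $\|u\|_{\mu'(p-1),\gamma'(p-1)}^{p-1}$ as $M^{(1-\sigma)(p-1)}\bigl(\int_0^T\|u\|_q^{\,q\frac{\gamma'}{\mu'}\frac{\mu'(p-1)-1}{q-1}}\,dt\bigr)^{1/\gamma'}$; substituting into \eqref{h2} gives \eqref{h2new} with $C(p,M)=p\,M^{(1-\sigma)(p-1)}$.

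The computation is an essentially standard energy estimate, so I expect the only delicate points to be the absorption step — verifying that, after trading the lower-order term $\|u\|_p^p$ against the $L^1$-bound, it can genuinely be absorbed into the Dirichlet-energy/$L^{q,p}$ term leaving only a remainder that is linear in $T$, with constants of the claimed dependence — and the exponent book-keeping in the last step needed to match the precise form of \eqref{h2new}.
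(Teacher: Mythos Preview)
Your proposal is correct and follows essentially the same route as the paper's proof: test \eqref{eq:heat-Neumann} with $p\,u^{p-1}$, apply the Sobolev embedding $H^1\hookrightarrow L^{2N/(N-2)}$ to $u^{p/2}$, absorb the lower-order $\|u\|_p^p$ term by interpolating between the uniform $L^1$-bound \eqref{MM} and $L^q$ together with Young's inequality, and finally H\"older in space and time; the second estimate \eqref{h2new} is obtained by the very same additional interpolation of $\|u\|_{\mu'(p-1)}$ between $L^1$ and $L^q$ that you describe. The only cosmetic difference is the order of operations (the paper applies Sobolev and the absorption step at the level of the differential inequality before integrating in time, whereas you integrate first), which is immaterial.
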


\begin{proof}

By testing \eqref{eq:heat-Neumann} with 
$p\,|u|^{p-1}\mathrm{sgn}(u)$ (more precisely by testing with a smoothed version of the modulus $|u|$ and its derivative $\mathrm{sgn}(u)$ and letting then the smoothing 
tend to zero), we obtain by integration-by-parts and with the constant $C_0(p,d):=\frac{4(p-1)\,d}{p}$~:
\begin{equation*}
 \frac{d}{dt}  \|u\|_{p}^{p} 
 +  C_0(p,d)\iO \left|\nabla_x (u^{p/2})\right|^2dx 
 \le 
 p\int_\Omega |f| |u|^{p-1}\,dx.
\end{equation*}
\begin{flushleft}
{\it  Sobolev's embedding theorem}
\end{flushleft}
After adding on both sides $C_0\int_\Omega |u|^pdx$, we apply  Sobolev's embedding theorem for $N\ge3$, i.e. 
$$\|u^{p/2}\|_{H^1(\Omega)}^2
=
\iO \left|\nabla_x (u^{p/2})\right|^2dx
+\iO \left|u\right|^pdx
\geq C_s\|u\|_{q}^{p} $$
with Sobolev constant $C_s$ and 
$$
q=q(p,N):=p\frac{N}{N-2}=p+\frac{2p}{N-2}\qquad\text{and}\quad  q(p)>p\quad \text{for all}\ N\ge3.
$$
Therefore, we get:
\begin{equation}\label{h0}
 \frac{d}{dt}  \|u\|_{p}^{p} 
 +  C_0C_s \|u\|^p_{q}
 \le 
 p\int_\Omega |f| |u|^{p-1}\,dx
 +C_0\|u\|_{p}^{p}.
\end{equation}

\begin{flushleft}
{\it  Interpolation and the mass conservation property}
\end{flushleft}

 Here we remark that integration of \eqref{h0} by means of a Gronwall type argument would lead to  global, yet exponentially growing estimates of $\|u\|_p^p$. However, such exponential growing estimates can be avoided (and should be avoided in order to retain the possibility of interpolating a-priori estimates with exponential convergence to equilibrium) 
by using the mass conservation laws \eqref{cna}, which provides a uniform-in-time $L^1(\Omega)$ bound of the form \eqref{MM}. 
More precisely, we interpolate the $L^p$ term $\|u\|_{p}^{p}$ on the right hand side of \eqref{h0} between $L^1$ and $L^q$, i.e. $\frac{1}{p}=\frac{\theta}{1} +\frac{1-\theta}{q}$ such that 
$\theta=\frac{q-p}{q-1}\frac1p\in(0,1)$ since $1<p<q$. Thus, due to the uniform-in-time bound 
$\|u(t,\cdot)\|_1\le M$ for all $t\ge 0$, we obtain
\begin{equation}\label{h1ii}
 \frac{d}{dt}  \|u\|_{p}^{p} 
 +  C_0C_s \|u\|^p_q
 \le 
 p\int_\Omega |f| |u|^{p-1}\,dx
 +C_0M^{\frac{q-p}{q-1}}\|u\|_{q}^{s},
\end{equation}
with an exponent
$$
s=s(p,N):=\frac{q(p-1)}{q-1}=\frac{Np(p-1)}{2+N(p-1)} = p - \frac{2p} {2+N(p-1)}<p, \qquad \forall p>1,\ N\ge3.
$$
Now, since $p>s$, the last term on the right hand side of \eqref{h1ii} can be controlled by the 
term $\|u\|^p_q$ on the left hand side of \eqref{h1ii} as in the following step. 

\begin{flushleft}
{\it  Young's and H\"olders's inequality, time-integration and further H\"older inequality}
\end{flushleft}

Next, we apply Young's inequality to the last term on the right hand side of \eqref{h1ii} to derive for a $\delta>0$ to be chosen sufficiently small
$$
 \frac{d}{dt}  \|u\|_{p}^{p} 
 +  C_0C_s \|u\|^p_q
 \le 
 p\int_\Omega |f| |u|^{p-1}\,dx
 +C'(C_0,\delta,M)+\delta\|u\|_{q}^{p},
$$
and, thus
$$
 \frac{d}{dt}  \|u\|_{p}^{p} 
 +  \tilde{C} \|u\|^p_q
 \le 
 p\int_\Omega |f| |u|^{p-1}\,dx
 +C'(C_0,\delta,M),
$$
with $\tilde{C}=C_0C_s-\delta>0$ for small enough $\delta$.
\medskip

In the following, we estimate the first term on the right hand side with H\"olders's inequality. In fact,  this shall first be done for a general H\"older exponent $\frac{1}{\mu'}+\frac1\mu=1$, i.e. 
\begin{equation}\label{h1}
 \frac{d}{dt}  \|u\|_{p}^{p} 
 + \tilde{C} \|u\|_{q}^{p}
 \le 
 p\|f\|_{\mu}\|u\|_{\mu'(p-1)}^{p-1}
 +C'
\end{equation}
where $\mu$ can be chosen in order to match the necessary assumptions. 
\begin{Rem}\label{case2}
We remark at this point that we will mostly chose $\mu$ such that $\mu'(p-1)=q$, i.e. we shall choose $\mu$ according to the maximal space integrability, which can be controlled by the left hand side term $\|u\|_{q}^{p}$. 
\end{Rem}

 Next, we integrate over $(0,T)$ and apply  H\"older again:
$$
  \|u(T)\|_{p}^{p} 
 +  \tilde{C}\|u\|_{q,p}^p
  \le 
 p\biggl(\int_0^T\|f\|_{\mu}^{\gamma} \,dt\biggr)^{\!\frac{1}{\gamma}}\biggl(\int_0^T\|u\|_{\mu'(p-1)}^{\gamma'(p-1)}\,dt\biggr)^{\!\frac{1}{\gamma'}}
+  \|u_0\|_{p}^{p} 
 +C'T
 $$
where $1=\frac1\gamma+\frac{1}{\gamma'}$. Thus, we have
$$
  \|u(T)\|_{p}^{p} 
 +  \tilde{C} \|u\|_{q,p}^p
  \le 
 p\|f\|_{\mu,\gamma}
\|u\|_{\mu'(p-1),\gamma'(p-1)}^{p-1}
+  \|u_0\|_{p}^{p} 
 +C'T,
$$
which proves the estimate \eqref{h2}.

\begin{flushleft}
{\it Further interpolation with the uniform-in-time $L^1$-bound}
\end{flushleft}

	We first remind that 
	$$
	\|u\|_{\mu'(p-1),\gamma'(p-1)}^{p-1}=\left(\int_0^T\|u\|_{\mu'(p-1)}^{\gamma'(p-1)}dt\right)^{1/\gamma'}.
	$$
	In order to further exploit the mass conservation property in cases such that $1\le \mu'(p-1)\le q$, we will interpolate $L^{\mu'(p-1)}\hookrightarrow
	 L^q\cap L^1$ with a parameter $\theta\in[0,1]$ that satisfies 
	 $$\frac{1}{\mu'(p-1)}=\theta+\frac{1-\theta}{q},\quad\text{or}\quad\theta=\frac{q-\mu'(p-1)}{\mu'(p-1)(q-1)}.$$
	Thus, we get
	$$
	\|u\|_{\mu'(p-1)}\leq\|u\|_1^\theta\|u\|_q^{1-\theta}\leq M^\theta\|u\|_q^{1-\theta}
	$$
	and
	$$
	\|u\|_{\mu'(p-1),\gamma'(p-1)}^{p-1}
	\leq
	\left(\int_0^T(M^\theta\|u\|_q^{1-\theta})^{\gamma'(p-1)}dt\right)^{1/\gamma'}
	=
	M^{\theta(p-1)}	
	\left(\int_0^T\|u\|_q^{\gamma'(1-\theta)(p-1)}dt\right)^{1/\gamma'}
	$$
	and by substituting $1-\theta=\frac{q\mu'(p-1)-q}{\mu'(p-1)(q-1)}$ into the previous relation, we have:
\begin{equation*}
	\|u\|_{\mu'(p-1),\gamma'(p-1)}^{p-1}
	\leq
		M^{\theta(p-1)}	
	\left(\int_0^T\|u\|_q^{q\frac{\gamma'}{\mu'}\frac{\mu'(p-1)-1}{(q-1)}}dt\right)^{1/\gamma'}.
\end{equation*}
	Thus, altogether we have 
$$
  \|u(T)\|_{p}^{p} 
 +  \tilde{C} \|u\|_{q,p}^p
  \le 
 C(p,M)\|f\|_{\mu,\gamma}
\left(\int_0^T\|u\|_q^{q\frac{\gamma'}{\mu'}\frac{\mu'(p-1)-1}{(q-1)}}dt\right)^{1/\gamma'}
+  \|u_0\|_{p}^{p} 
 +C'T,	
$$  
which proves \eqref{h2new} with $C(p,M)=p M^{\theta(p-1)}$.
\end{proof}

The estimate \eqref{h2} still allows to choose $\mu$ and $\gamma$ in our goal of deriving a-priori estimates for $u$.
At this point, there are two basic options: First, we could aim to control the term $\|u\|_{\mu'(p-1),\gamma'(p-1)}^{p-1}$ entirely 
by the duality estimate \eqref{Lpg2}, which we have anyway already used to ensure the integrability $f\in L^{\Gamma,\Gamma}$ and $u_i\in L^{2\Gamma,2\Gamma}$. 
Thus, in this first case, we choose $\mu$ and $\gamma$ such that  
$$
\gamma=\mu=\Gamma\qquad\text{and}\qquad \gamma'(p-1)=\mu'(p-1)=2\Gamma,
$$
since the duality estimate \eqref{Lpg2} implies $\|u\|_{2\Gamma,2\Gamma}\leq C_T$. Thus, we choose 
$$\gamma=\mu=\frac{p+1}{2}$$
and \eqref{h2} becomes:
\begin{equation}\label{h2s}
  \|u(T)\|_{p}^{p} 
 +  \tilde{C} \|u\|_{q,p}^p
  \le 
 p\|f\|_{\frac{p+1}{2},\frac{p+1}{2}}
\|u\|_{p+1,p+1}
+  \|u_0\|_{p}^{p} 
 +C'T.\end{equation}
If  we consider $p=2$, then \eqref{h2s} requires to assume the $\delta$-closeness condition in Proposition \ref{propone} small enough such that 
the duality estimate ensures $u_i\in L_{3,3}$ and $f\in L_{\frac32,\frac32}$. In the case $N=3$, under such a $\delta$-closeness assumption, 
\eqref{h2s} then implies directly $u_i\in L_{2,\infty}$ and we can apply Lemma \ref{weakcomparisonbootstrap} in order to establish global classical solutions in 3D, see also \cite{LSY}. In order to show classical solutions in higher space dimensions $N>3$, we can proceed similar by considering \eqref{h2s} for suitable exponents $p>2$. However, the idea to directly control the term $\|u\|_{\mu'(p-1),\gamma'(p-1)}^{p-1}$ via a duality estimate \eqref{Lpg2} does not lead to such general results as stated in Theorem \ref{Theorem}. This which will become clear from the following considerations.
\medskip

From now onwards, we are interested in the second case, where the term $\|u\|_{\mu'(p-1),\gamma'(p-1)}^{p-1}$ cannot be controlled by  the duality estimate \eqref{Lpg2}, but has to be controlled by the term $\|u\|_{q,p}^p$ on the left hand side of \eqref{h2}.
In order to constitute an improvement of the argument described in the previous paragraph, we are interested in cases when 
$$\mu'(p-1)>2\Gamma$$
holds. 
\begin{Rem}
In fact, we will see in the following that our method naturally considers $\mu\leq\gamma$, i.e. that the time-integrability is larger (or at least equal) to the space-integrability. This leads to two possible subcases 
$$\gamma'(p-1)<2\Gamma<\mu'(p-1)\qquad\text{or}\qquad 2\Gamma< \gamma'(p-1)<\mu'(p-1),
$$
where the first subcase would in principle still allow to partially use the duality estimate $\|u\|_{2\Gamma,2\Gamma}\leq C_T$ in a bootstrap argument. 

However, by anticipating some details from the below calculations, in particular by considering the starting value of a bootstrap exponent $p_0=\frac{\Gamma(N+2)-2}{N-2}>\frac{N}{2}$ for $\Gamma>\frac{N+2}{2}-\frac{2N}{N+2}$ and further $\mu'(p_0-1)=\frac{p_0N}{N-2}$ and $\gamma'(p_0-1)=p_0$, we see that the below arguments are able to successfully treat some cases (in dimensions $N=3,4$), where
$$
2\Gamma< \gamma'(p_0-1)<\mu'(p_0-1).
$$
This means that the most general results as stated in Theorem \ref{Theorem} are derived from entirely controlling the term $\|u\|_{\mu'(p-1),\gamma'(p-1)}^{p-1}$ by the term $\|u\|_{q,p}^p$ on the left hand side of \eqref{h2}. 
\end{Rem}

\bigskip

The following two Lemmata \ref{equal} and \ref{unequal} develop further the estimates of Lemma \ref{Lemma31} for two different choices of $\mu$ and $\gamma$:
Lemma \ref{equal} takes into account that the duality estimates \eqref{Lpg2} only allow to control $f$ in spaces with the same regularity in space and in time, i.e. $f\in L^{\Gamma,\Gamma}(\Omega_T)$.
Therefore, Lemma \ref{equal} sets $\mu=\gamma$:

\begin{Lem}[$\mu=\gamma$ estimate]\label{equal}
Consider $N\ge3$ and assume that $f\in L^{\frac{pN+2}{N+2},\frac{pN+2}{N+2}}$ and $u_0\in L^p(\Omega)$ for some $p>1$.  

Then, we have with $q=\frac{pN}{N-2}$
\begin{equation}\label{apri1}
u \in L^{p,\infty}\cap L^{q,p} \qquad \text{with}\qquad 
 \|u(T)\|_{p}^{p}, \ \|u\|_{q,p}^p
  \le 
C(p,M,\|f\|_{\frac{pN+2}{N+2},\frac{pN+2}{N+2}},\|u_0\|^{p})
 \end{equation}
with a constant $C$, which grows at most polynomially in time $T$. 

	\end{Lem}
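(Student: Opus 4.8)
The plan is to invoke Lemma \ref{Lemma31} with the single distinguished choice of Hölder exponents $\mu=\gamma=\frac{pN+2}{N+2}$, which is precisely the integrability in which $f$ is assumed to lie, and then to absorb the resulting feedback term into the left-hand side of the estimate. First I would record that this choice is admissible for the refined estimate \eqref{h2new}, i.e. that $1\le\mu'(p-1)\le q$: a short computation gives $\mu'=\frac{pN+2}{N(p-1)}$, so $\mu'(p-1)=p+\frac2N$, which lies strictly between $1$ and $q=p+\frac{2p}{N-2}$ for every $p>1$ and $N\ge3$. Hence \eqref{h2new} applies, and since $\gamma=\mu$ we have $\gamma'=\mu'$ and the factor $\frac{\gamma'}{\mu'}$ equals $1$.

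The crux is the arithmetic of the exponent of $\|u\|_q$ under the time integral on the right of \eqref{h2new}. With the above values one has $\mu'(p-1)-1=\frac{N(p-1)+2}{N}$ and $q-1=\frac{N(p-1)+2}{N-2}$, so $\frac{\mu'(p-1)-1}{q-1}=\frac{N-2}{N}$ and therefore
\[
q\,\frac{\gamma'}{\mu'}\,\frac{\mu'(p-1)-1}{q-1}=\frac{pN}{N-2}\cdot\frac{N-2}{N}=p.
\]
Thus the right-hand side of \eqref{h2new} is exactly $C(p,M)\,\|f\|_{\mu,\gamma}\,\|u\|_{q,p}^{p/\gamma'}$, and since $\gamma'>1$ the exponent $p/\gamma'$ is strictly below $p$. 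This is the whole point of the choice of $\mu$: I would then apply Young's inequality to dominate $C(p,M)\|f\|_{\mu,\gamma}\|u\|_{q,p}^{p/\gamma'}$ by $\tfrac{\tilde{C}}{2}\|u\|_{q,p}^p+C\,\|f\|_{\mu,\gamma}^{(\gamma')'}$, where one computes $(\gamma')'=\frac{pN+2}{N+2}$, and absorb the first summand into the term $\tilde{C}\|u\|_{q,p}^p$ on the left of \eqref{h2new}. This yields
\[
\|u(T)\|_{p}^{p}+\tfrac{\tilde{C}}{2}\|u\|_{q,p}^{p}\le C\Bigl(\|f\|_{\frac{pN+2}{N+2},\frac{pN+2}{N+2}}^{\frac{pN+2}{N+2}}+\|u_0\|_{p}^{p}+T\Bigr),
\]
with $C$ depending only on $p$, $M$, $d$, $N$ and the Sobolev constant $C_s$.

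To pass from $\|u(T)\|_{p}^{p}$ to the $L^{p,\infty}$-norm I note that the estimate above holds verbatim with $T$ replaced by any $t\in[0,T]$, since the derivation only integrates over $(0,t)$ and every space-time norm over $\Omega_t$ is bounded by the corresponding norm over $\Omega_T$; taking the supremum over $t\le T$ bounds $\|u\|_{p,\infty}^p$ by the same right-hand side, and together with the $\|u\|_{q,p}^p$ bound this is exactly \eqref{apri1}. The only $T$-dependence is the additive linear term, so the constant grows at most polynomially (in fact linearly) in $T$, as claimed. I do not expect a genuine obstacle here: the content is entirely the observation that $\frac{pN+2}{N+2}$ is the unique exponent for which the feedback power on $\|u\|_{q,p}$ drops below $p$, so the remaining difficulty is bookkeeping — one must only check that the constants $\tilde{C}$ and $C'$ delivered by Lemma \ref{Lemma31} carry no hidden $T$-dependence, which they do not.
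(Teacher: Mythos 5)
Your proof is correct and follows essentially the same route as the paper's: you apply \eqref{h2new} with $\mu=\gamma=\frac{pN+2}{N+2}$ (the paper instead imposes $q\frac{\mu'(p-1)-1}{q-1}=p$ and solves for $\mu$, landing on the same value), verify that the exponent on $\|u\|_q$ reduces to $p$, and absorb the feedback term by Young's inequality with conjugate pair $(\mu',\mu)$. Your explicit step of taking the supremum over $t\le T$ to obtain the $L^{p,\infty}$ bound is a minor but welcome clarification that the paper leaves implicit.
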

\begin{proof}
By recalling \eqref{h2new} from Lemma \ref{Lemma31} and setting 
$\gamma=\mu$, we have especially 
\begin{equation}\label{h2newnew}
  \|u(T)\|_{p}^{p} 
 +  \tilde{C} \|u\|_{q,p}^p
  \le 
 C(p,M)\|f\|_{\mu,\mu}
\left(\int_0^T\|u\|_q^{q\frac{\mu'(p-1)-1}{(q-1)}}dt\right)^{1/\mu'}
+  \|u_0\|_{p}^{p} 
 +C'T.
\end{equation}
Then, the first term on the right hand side can be controlled by the term $\|u\|_{q,p}^p$ on the left hand side of \eqref{h2new} by setting 
$$
q\frac{\mu'(p-1)-1}{(q-1)}=p, \qquad \text{such that} \qquad
\left(\int_0^T\|u\|_q^{q\frac{\mu'(p-1)-1}{(q-1)}}dt\right)=\|u\|_{q,p}^p,
$$
if we verify that the corresponding H\"older exponent satisfies $\mu'>1$ (such that $\frac{1}{\mu'}<1$) and that the interpolation $L^{\mu'(p-1)}\hookrightarrow L^q\cap L^1$ used in Lemma \ref{Lemma31} is admissible. This is done by recalling that $q=\frac{pN}{N-2}$ and straightforward calculations show the corresponding values of $\mu'$, $\mu$ and  $\theta$ to be
\begin{equation*}
\mu'=\frac{pq+q-p}{q(p-1)}=\frac{pN+2}{N(p-1)}>1,\quad	\quad 
\mu=\frac{pq+q-p}{2q-p}=\frac{pN+2}{N+2}>1,
\end{equation*}
$$
\theta=\frac{q-p}{pq+q-p}=\frac{2}{pN+2}\in(0,1),
$$
for all $p>1$.

Therefore, the estimate \eqref{h2newnew} becomes:
\begin{equation*}
  \|u(T)\|_{p}^{p} 
 +  \tilde{C} \|u\|_{q,p}^p
  \le 
C( p,M)\|f\|_{\frac{pN+2}{N+2},\frac{pN+2}{N+2}}
	\left(\|u\|_{q,p}^p\right)^{\frac{1}{\mu'}}
+  \|u_0\|_{p}^{p} 
 +C'T.\end{equation*}
Next, since $\mu',\mu>1$, we estimate $\left(\|u\|_{q,p}^p\right)^{\frac{1}{\mu'}}$ with Young's inequality and obtain for  a sufficiently small enough $\epsilon>0$
\begin{equation}\label{h2myy}
  \|u(T)\|_{p}^{p} 
 +  \hat{C} \|u\|_{q,p}^p
  \le 
C(p,M,\epsilon)\|f\|^{\frac{pN+2}{N+2}}_{\frac{pN+2}{N+2},\frac{pN+2}{N+2}}
+  \|u_0\|_{p}^{p} 
 +C'T,\end{equation}
where $\hat{C}=\tilde{C}-\epsilon>0$.
	
Thus, provided that $ \|f\|_{\frac{pN+2}{N+2},\frac{pN+2}{N+2}}$ is bounded, i.e. $f\in L^{\frac{pN+2}{N+2},\frac{pN+2}{N+2}}$ and $u_0\in L^p$, 
the estimate \eqref{h2myy} implies  
$$
f\in L^{\frac{pN+2}{N+2},\frac{pN+2}{N+2}} \quad \text{and}\quad u_0\in L^p \quad \Rightarrow \quad
u \in L^{p,\infty}\cap L^{q,p},
$$
where $q=\frac{pN}{N-2}$ and the involved constants grow at most polynomially in time. 
\end{proof}

The following Lemma \ref{unequal} will allow to perform a bootstrap argument build on estimate   \eqref{apri1} of 
Lemma \ref{equal} in order to improve the integrability of $f$.

We point out that estimate   \eqref{apri1} leads naturally to a-priori estimates for $u$ which exhibit larger time- than space-integrability (see e.g. 
Lemma \ref{LLInterpol} for the interpolation of the space $L^{p,\infty}$ and  $L^{q,p}$).
Therefore, the following Lemma \ref{unequal} provides a suitable 
variant of Lemma \ref{equal} 
based on the assumption that $f$ features higher time- than space-integrability. More precisely, we shall assume $f\in L^{r,p}$ where $r=r(p,N)$ is a specific space-integrability exponent with $r(p,N)<p$.

\begin{Lem}[$\mu<\gamma$ estimate]\label{unequal}
Consider $N\geq3$. Assume for some $p>1$ that $u_0\in L^p$ and
$f\in L^{r,p}$ with 
$$
r=r(p,N):=\frac{pN}{N+2(p-1)}= p - \frac{2p(p-1)}{N+2(p-1)}<p.
$$ 
Then,
$$
u \in  L^{p,\infty}\cap L^{q,p},
$$
where as above $q=q(p,N) = \frac{pN}{N-2}>p$.

Moreover, by using interpolation in Bochner spaces (see Lemma \ref{LLInterpol}), we have for any interpolation exponent
$\theta\in(0,1)$: 
\begin{equation}\label{apri2}
u\in L^{p,\infty}\cap L^{q,p}\hookrightarrow L^{\sigma,\tau} \ \Rightarrow\ f\in L^{\frac{\sigma}{2},\frac{\tau}{2}}
\qquad
\text{with}
\qquad \sigma(\theta,p)=\frac{pN}{N+2\theta-2},\quad \tau(\theta,p)=\frac{p}{1-\theta}.
\end{equation}
\end{Lem}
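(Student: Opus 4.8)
The plan is to deduce Lemma~\ref{unequal} from Lemma~\ref{Lemma31} by the same mechanism as in the proof of Lemma~\ref{equal}, but now choosing the H\"older pair $(\mu,\gamma)$ so that the \emph{time} exponent $\gamma$ equals $p$ rather than forcing $\mu=\gamma$. Concretely, I would go back to estimate~\eqref{h2new}, i.e.
\begin{equation*}
  \|u(T)\|_{p}^{p}
 +  \tilde{C} \|u\|_{q,p}^p
  \le
 C(p,M)\,\|f\|_{\mu,\gamma}
\left(\int_0^T\|u\|_q^{q\frac{\gamma'}{\mu'}\frac{\mu'(p-1)-1}{(q-1)}}\,dt\right)^{1/\gamma'}
+  \|u_0\|_{p}^{p}
 +C'T,
\end{equation*}
which is valid as soon as $1\le\mu'(p-1)\le q$. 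The aim is to absorb the last $u$-factor into $\|u\|_{q,p}^p$ on the left, which requires the two relations
\begin{equation*}
\gamma'=p \qquad\text{and}\qquad q\,\frac{\gamma'}{\mu'}\,\frac{\mu'(p-1)-1}{q-1}=p ,
\end{equation*}
the first making the outer exponent $1/\gamma'$ into $1/p$, the second making the time-integral exactly $\|u\|_{q,p}^p$. The first gives $\gamma=p'$, i.e. $\gamma=\frac{p}{p-1}$ and $\gamma'=p$; substituting into the second and using $q=\frac{pN}{N-2}$ one solves for $\mu'$, and a short computation should give $\mu'(p-1)=\frac{q}{\,1+\frac{q-1}{q}\,}$-type expression that simplifies to the claimed $r'$. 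I would then record that the resulting $\mu$ is the H\"older conjugate of $\mu'$ and verify $\mu=r=\frac{pN}{N+2(p-1)}$, so that the hypothesis $f\in L^{r,p}=L^{\mu,\gamma}$ is exactly what is needed.

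Next I would check the two admissibility conditions that Lemma~\ref{Lemma31} demands: that $\mu'>1$ (so the outer H\"older step and Young's inequality below make sense, equivalently $\frac1{\mu'}<1$) and that the interpolation $L^{\mu'(p-1)}\hookrightarrow L^q\cap L^1$ is legitimate, i.e. $1\le\mu'(p-1)\le q$. Both should reduce to elementary inequalities in $p>1$, $N\ge3$, analogous to the inequalities $\mu'=\frac{pN+2}{N(p-1)}>1$ and $\theta=\frac{2}{pN+2}\in(0,1)$ that appear in Lemma~\ref{equal}. Having these, the displayed inequality becomes
\begin{equation*}
  \|u(T)\|_{p}^{p} + \tilde C\,\|u\|_{q,p}^p \le C(p,M)\,\|f\|_{r,p}\,\bigl(\|u\|_{q,p}^p\bigr)^{1/p} + \|u_0\|_p^p + C'T ,
\end{equation*}
and since $\frac1p<1$ one applies Young's inequality to the first term on the right, absorbing $\epsilon\|u\|_{q,p}^p$ into the left-hand side and leaving $\hat C=\tilde C-\epsilon>0$. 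This yields
\begin{equation*}
\|u(T)\|_p^p,\ \|u\|_{q,p}^p \le C\bigl(p,M,\|f\|_{r,p},\|u_0\|_p^p\bigr)+C'T ,
\end{equation*}
with all constants polynomial in $T$; taking the supremum over $T$ on the first term gives $u\in L^{p,\infty}$, and the second gives $u\in L^{q,p}$.

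For the final assertion~\eqref{apri2} I would simply feed $u\in L^{p,\infty}\cap L^{q,p}$ into the Bochner interpolation Lemma~\ref{LLInterpol}: interpolating the time-$\infty$/space-$p$ endpoint against the time-$p$/space-$q$ endpoint with weight $\theta$ gives $u\in L^{\sigma,\tau}$ with $\frac1\sigma=\frac{1-\theta}{p}+\frac{\theta}{q}$ and $\frac1\tau=\frac{1-\theta}{\infty}+\frac{\theta}{p}=\frac\theta p$; inserting $q=\frac{pN}{N-2}$ and simplifying gives exactly $\sigma=\frac{pN}{N+2\theta-2}$ and $\tau=\frac p{1-\theta}$... wait, one must double-check the roles of $\theta$ and $1-\theta$ against the statement, but it is just bookkeeping to match conventions. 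Then, since the nonlinearity obeys $f\le|u_1u_2-u_3u_4|$ and hence $|f|\lesssim u^2$ pointwise (quadratic order), one has $\|f\|_{\sigma/2,\tau/2}\lesssim \|u\|_{\sigma,\tau}^2<\infty$, which is the claimed implication $f\in L^{\sigma/2,\tau/2}$. The only genuine obstacle I anticipate is the algebra of pinning down $\mu,\mu',r$ from the two constraints $\gamma'=p$ and $q\frac{\gamma'}{\mu'}\frac{\mu'(p-1)-1}{q-1}=p$ and then checking $1\le\mu'(p-1)\le q$ and $\mu'>1$ hold for all admissible $p$ and $N$; everything after that is the now-standard Young-absorption plus interpolation routine already executed in Lemma~\ref{equal}.
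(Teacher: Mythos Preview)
Your strategy is right in spirit---choose the H\"older pair so that the $u$-factor on the right of Lemma~\ref{Lemma31} can be absorbed into $\|u\|_{q,p}^p$ via Young---but you have swapped $\gamma$ and $\gamma'$. You set $\gamma'=p$, hence $\gamma=\tfrac{p}{p-1}$, and then assert $L^{\mu,\gamma}=L^{r,p}$; these are inconsistent unless $p=2$. Worse, if you actually solve your second constraint $q\,\tfrac{\gamma'}{\mu'}\,\tfrac{\mu'(p-1)-1}{q-1}=p$ with $\gamma'=p$, you obtain $\mu'=\tfrac{q}{q(p-2)+1}$, which is not $\tfrac{q}{p-1}$ and can even be negative (e.g.\ $p=\tfrac32$, $N=3$ gives $\mu'=-\tfrac{18}{5}$). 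The correct choice is $\gamma=p$, i.e.\ $\gamma'=\tfrac{p}{p-1}$: then your second constraint collapses to $\mu'(p-1)=q$, whence $\mu=\tfrac{q}{q-p+1}=\tfrac{pN}{N+2(p-1)}=r$ as desired, the admissibility conditions $\mu'>1$ and $1\le\mu'(p-1)\le q$ are immediate, and the right-hand factor becomes $(\|u\|_{q,p}^p)^{(p-1)/p}=\|u\|_{q,p}^{p-1}$, which Young absorbs exactly as you describe.

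With that fix your route through \eqref{h2new} is equivalent to the paper's, though the paper is slightly more direct: it returns to the differential inequality \eqref{h1} (i.e.\ \emph{before} the $L^1$ interpolation step), sets $\mu'(p-1)=q$ there to obtain $\tfrac{d}{dt}\|u\|_p^p+\tilde C\|u\|_q^p\le p\|f\|_r\|u\|_q^{p-1}+C'$, and only then integrates in time and applies H\"older with $\gamma=p$. Since $\mu'(p-1)=q$ makes the $L^1$--$L^q$ interpolation behind \eqref{h2new} trivial ($\theta=0$), the two routes coincide. Your final step---Bochner interpolation and the pointwise quadratic bound $|f|\lesssim u^2$ yielding $f\in L^{\sigma/2,\tau/2}$---is correct, modulo the $\theta\leftrightarrow 1-\theta$ bookkeeping you already flagged.
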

\begin{proof}
We return to the proof of Lemma \ref{Lemma31} and consider $\mu<\gamma$ in order to derive a-priori estimates on $u$ based on $f$ featuring higher time- rather than space-integrability.
In particular, we can set $\mu'(p-1)=q$ in \eqref{h1}, having in mind that the $L^q$ norm is the highest possible space, which can be controlled by the the left hand side of \eqref{h1}, see also Remark \ref{case2}. Thus, we get:
\begin{equation}\label{h11}
 \frac{d}{dt}  \|u\|_{p}^{p} 
 + \tilde{C} \|u\|_{q}^{p}
 \le 
 p\|f\|_{r}\|u\|_{q}^{p-1}
 +C'
\end{equation}
where the index 
$$
r=r(p,N):=\frac{q}{1+q-p}=\frac{pN}{N-2+2p}=p - \frac{2p(p-1)}{N+2(p-1)}< p, \qquad \forall \ p>1, \ N\ge3,
$$ 
denotes the minimal space-integrability requirement on $f$ such that the corresponding 
term $\|u\|_{q}^{p-1}$ can still be controlled by the left hand side term $\|u\|_{q}^{p}$.
Next, we integrate \eqref{h11} over $(0,T)$ for any $T>0$
$$
  \|u(T)\|_{p}^{p} 
 +   \tilde{C}\int_0^T\|u\|^p_q\,dt
 \le 
  \|u_0\|_{p}^{p} 
 +p\int_0^T\|f\|_{r}\|u\|_{q}^{p-1}\,dt
 +C'T
$$
and apply on the right hand side H\"older's inequality with $1=\frac{1}{\gamma'}+\frac1\gamma$ to obtain
\begin{equation}\label{h22}
  \|u(T)\|_{p}^{p} 
 +   \tilde{C} \|u\|_{q,p}^p
  \le 
  \|u_0\|_{p}^{p} 
 +p\|f\|_{r,\gamma}
\|u\|_{q,\gamma'(p-1)}^{p-1}
 +C'T.
 \end{equation}
Thus, we set $\gamma'(p-1)=p$ and $\gamma=p$ in \eqref{h22}
\begin{equation*}
  \|u(T)\|_{p}^{p} 
 +   \tilde{C} \|u\|_{q,p}^p
  \le 
  \|u_0\|_{p}^{p} 
 +p\|f\|_{r,p}
{\|u\|_{q,p}^{p-1}}
 +C'T
 \end{equation*}
and apply once more Young's inequality to the right hand side term ${\|u\|_{q,p}^{p-1}}$
such that for a sufficiently small $\eps>0$ 
\begin{equation}\label{h3a}
  \|u(T)\|_{p}^{p}  +  \hat{C} \|u\|_{q,p}^p \le 
  \|u_0\|_{p}^{p} 
 +C(\eps,p)\|f\|_{r,p}^{p} 
 +C'T,
\end{equation}
where $\hat{C} = \tilde{C}-\eps>0$.

Altogether, if $ \|f\|_{r,p}$ is bounded, i.e. $f\in L^{r,p}$ and $u_0\in L^p$, we have
$$
f\in L^{r,p} \quad \text{and}\quad u_0\in L^p \quad \Rightarrow \quad
u \in  L^{p,\infty}\cap L^{q,p}.
$$
We remark that in $f\in L^{r,p}$, since we have set $\gamma(p-1)=p$ and $\gamma'=p$ in \eqref{h22}, the time-integrability index $p$ represents the minimal 
required time-regularity such that estimate \eqref{h3a} holds.

In the last step of the proof of Lemma \ref{unequal}, we use the Bochner space interpolation  Lemma \eqref{LLInterpol} (see Appendix) and get 
$$ 
L^{p,\infty}\cap L^{q,p}\hookrightarrow  L^{\sigma,\tau},
$$
where $\sigma$ and $\tau$ are defined depending on an interpolation parameter $\theta\in(0,1)$ by 
$$
\frac1\sigma=\frac{\theta}{p}+\frac{1-\theta}{q}\qquad
\text{and}\qquad
\frac1\tau=\frac{\theta}{\infty}+\frac{1-\theta}{p},
$$
which leads to 
$$
\sigma(\theta,p)=\frac{pN}{N+2\theta-2},\qquad \tau(\theta,p)=\frac{p}{1-\theta}.
$$
\end{proof}

Finally, the following Lemma \ref{weakcomparisonbootstrap} provides a bootstrap argument based on ideas of
\cite{KarSuz,LSY,SuzYam}, which combines semigroup estimates with weak comparison arguments and allows to bootstrap polynomially in time growing $L^\infty$-bounds.

\begin{Lem}[Weak comparison bootstrap argument]\label{weakcomparisonbootstrap} \hfill\\
Consider $\nu\ge2$ and let $u_i(t,\cdot)$ be weak solutions to the systems \eqref{sys4} or \eqref{sys4GEN}
for $i=1,\ldots,k$. In particular we have $\nu=2$ and $k=4$ for solutions $u_i(t,\cdot)$ of system \eqref{sys4}. 
Assume that for an exponent 
\begin{equation}\label{bootstrapcon}
\mu_0>\max\Bigl\{\nu,\frac{(\nu-1)N}{2}\Bigr\}
\end{equation}
and for all $i=1,\ldots,k$ holds
$$
\|u_i(t,\cdot)\|_{L^{\mu_0}(\Omega)} \le C(t),\qquad \text{for all}\quad t\ge 0,
$$
where $C(t)>0$ is a continuous function in time, which grows polynomially. 

Then, the solution $u_i(\cdot,t)$ satisfies 
\begin{equation} 
\Vert u_1(t,\cdot)\Vert_\infty+\Vert u_2(t,\cdot)\Vert_\infty+\Vert u_3(t,\cdot)\Vert_\infty+ \Vert u_4(t,\cdot)\Vert_\infty\leq C(t), \qquad\text{for all}\quad t> 0, 
 \label{eqn:ue}
\end{equation} 
where $C(t)>0$ is again a (different) continuous function in time on $(0,\infty)$, which grows polynomially. 
 \label{l1}
\end{Lem}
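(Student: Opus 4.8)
\emph{Approach.} The plan is to run the bootstrap scheme of \cite{KarSuz,LSY,SuzYam}: first reduce the coupled system to scalar linear heat equations by a weak comparison argument, and then upgrade the spatial integrability of the $u_i$ in finitely many steps by combining Duhamel's formula with $L^{\rho}$--$L^{q}$ smoothing estimates for the Neumann heat semigroup, taking care that every constant grows at most polynomially in time.

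\emph{Step 1 (weak comparison).} By positivity preservation $u_i\ge 0$, and the structural assumption on the reaction terms yields, for each $i$,
\begin{equation*}
\pa_t u_i - d_i\,\Delta_x u_i = f_i(u) \le g,\qquad 0\le g(t,x):=C\sum_{j=1}^{k}u_j(t,x)^{\nu},\qquad n(x)\cd\nx u_i=0\ \text{ on }\ \pa\Omega ,
\end{equation*}
where for \eqref{sys4} one simply uses $|u_1u_2-u_3u_4|\le\sum_{j}u_j^{2}$ (so $\nu=2$, $k=4$). Thus $u_i$ is a weak subsolution of the scalar problem $\pa_t w-d_i\Delta_x w=g$ with homogeneous Neumann data and initial datum $u_{i0}$, whose solution is $\bar u_i(t)=S_{d_i}(t)u_{i0}+\int_0^{t}S_{d_i}(t-s)g(s,\cdot)\,ds$, where $(S_d(\tau))_{\tau\ge0}$ denotes the Neumann heat semigroup generated by $d\Delta_x$ on $\Omega$. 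The weak comparison principle then gives $0\le u_i(t,x)\le\bar u_i(t,x)$ on $\Omega_T$ for every $T>0$.

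\emph{Step 2 (gain of integrability).} I would use the standard smoothing estimate on the bounded domain $\Omega$, $\|S_d(\tau)h\|_{L^q(\Omega)}\le C(1+\tau^{-\frac N2(\frac1\rho-\frac1q)})\|h\|_{L^\rho(\Omega)}$ for $1\le\rho\le q\le\infty$, $\tau>0$. Suppose that for some $p\ge\nu$ one already knows $\|u_j(t,\cdot)\|_{L^p(\Omega)}\le C(t)$ for all $j$ and all $t\ge0$ with $C$ continuous and at most polynomially growing (in particular $u_{j0}\in L^p(\Omega)$). Then $\|g(s,\cdot)\|_{L^{p/\nu}(\Omega)}\le C\sum_{j}\|u_j(s,\cdot)\|_{L^p}^{\nu}\le C\,C(s)^{\nu}$, and for any $q$ with $\tfrac1q>\tfrac\nu p-\tfrac2N$ the time exponent $\beta:=\tfrac N2(\tfrac\nu p-\tfrac1q)$ is $<1$, so Step 1 together with the smoothing estimate gives, uniformly in $i$,
\begin{equation*}
\|u_i(t,\cdot)\|_{L^q(\Omega)}\le C\bigl(1+t^{-\frac N2(\frac1p-\frac1q)}\bigr)\|u_{i0}\|_{L^p}+C\int_0^{t}\bigl(1+(t-s)^{-\beta}\bigr)C(s)^{\nu}\,ds ,
\end{equation*}
and the right-hand side is finite, continuous in $t$ on $(0,\infty)$, and grows at most polynomially. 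Hence $\|u_i(t,\cdot)\|_{L^q(\Omega)}\le\widetilde C(t)$ on $(0,\infty)$ with $\widetilde C$ of the same type.

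\emph{Step 3 (finite iteration) and the main difficulty.} I would then iterate Step 2: set $p_0:=\mu_0$ and $1/p_{n+1}:=(\nu/p_n-2/N)_{+}$, so that $p_{n+1}=\infty$ as soon as $\nu/p_n\le2/N$. Writing $x_n:=1/p_n$, this is the affine recursion $x_{n+1}=\nu x_n-2/N$, whose fixed point $x^{\ast}=\tfrac{2}{(\nu-1)N}$ is repelling with $x_{n+1}-x^{\ast}=\nu(x_n-x^{\ast})$; the hypothesis $\mu_0>\tfrac{(\nu-1)N}{2}$ says exactly $x_0<x^{\ast}$, so $x_n=x^{\ast}-\nu^{n}(x^{\ast}-x_0)$ is strictly decreasing to $-\infty$ and reaches $x_{n^{\ast}}\le0$, that is, $p_{n^{\ast}}=\infty$, in finitely many steps. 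Since $p_n$ is increasing, $p_n\ge\mu_0>\nu$ at every stage, which is exactly the admissibility requirement $p\ge\nu$ of Step 2 (this is where the second half of \eqref{bootstrapcon}, $\mu_0>\nu$, enters), and the borderline case $\nu/p_{n^{\ast}-1}=2/N$ is absorbed by inserting one extra intermediate exponent. Starting from the assumed bound $\|u_i(t,\cdot)\|_{L^{\mu_0}(\Omega)}\le C(t)$, finitely many applications of Step 2 therefore yield $\|u_i(t,\cdot)\|_{L^{\infty}(\Omega)}\le C(t)$ on $(0,\infty)$ with $C$ continuous and polynomially growing; summing over $i=1,\dots,k$ gives \eqref{eqn:ue}. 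The hard part is not the semigroup estimates themselves but two pieces of bookkeeping: (i) justifying the weak comparison and the Duhamel representation for merely weak solutions with a source $g$ that is only in $L^{\infty}_{loc}$ in time with values in $L^{p/\nu}(\Omega)$, $p/\nu>1$; and (ii) verifying at each step that the constants coming from the smoothing bound and the time integral grow no faster than polynomially in $T$ — which is precisely what Theorem \ref{Theorem} needs in order to interpolate these bounds against the exponential convergence to equilibrium of Proposition \ref{Convergence}.
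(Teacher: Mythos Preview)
Your argument is correct and follows essentially the same route as the paper: weak comparison with a scalar linear heat problem, Duhamel plus $L^{\rho}\!\to\!L^{q}$ smoothing for the Neumann heat semigroup, and the same affine bootstrap recursion with repelling fixed point $p^{\ast}=\tfrac{(\nu-1)N}{2}$. Two cosmetic differences are worth noting: the paper adds an (ultimately unused) shift $-\lambda$ to the generator, and---more relevantly---it restarts the Duhamel formula from a positive time $\tau>0$ rather than from $0$; you will need this restart in your iteration, since after Step~2 the bound $\|u_i(t,\cdot)\|_{L^{p_1}}\le\widetilde C(t)$ holds only on $(0,\infty)$ and does not give $u_{i0}\in L^{p_1}$, so the parenthetical ``in particular $u_{j0}\in L^{p}(\Omega)$'' fails from the second step onward unless you shift the initial time.
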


\begin{proof}
We remind that each $(u_i)_{ i=1\ldots k}$ satisfies
\begin{equation*}
    \left\{
      \begin{aligned}
        &\partial_t u_i - d_i \,\Delta_x u_i = f_i, &&\quad \text{ on } \quad \Omega_T,
        \\
        &\nabla_x u_i \cdot n(x) = 0, &&\quad \text{ on } \quad  [0,T]
        \times \partial \Omega,
        \\
        &u_i(0,x) = u_{i0}(x)\ge0, &&\quad \text{ for } \quad x \in \Omega.
              \end{aligned}
    \right.
  \end{equation*}
In particular, the concentrations $(u_i)_{ i=1\ldots k}$ are non-negative and the considered nonlinearities $|f_i|\le C \sum_{j=1}^k u_j^{\nu}$ are super-quadratic in the $u_i$ of order $\nu\ge2$. 
 This motivates us, by neglecting the negative terms of $f$, to consider: 
 $$
\partial_t u_i\leq d_i\Delta u_i+Cu^{\nu}, \qquad i=1,\ldots,k
 $$
where $u^\nu=\sum_{j=1}^k u_j^{\nu}$ for simplicity 
and $d_i$ is the diffusion rate of $u_i$.

Now, we define $g_i=\lambda u_i+Cu^\nu$ for $\lambda>0$ and all $i=1,\ldots,k$ 
and consider the linear comparison problem:
\begin{equation}
\label{p1}
\begin{cases}
\overline{u}_t=d_i\Delta \overline{u}-\lambda \overline{u}+g_i(t,\cdot),&\quad \text{ on } \quad \Omega_T,\\
\nabla_x \overline{u} \cdot n(x) = 0,&\quad \text{ on } \quad  [0,T]
        \times \partial \Omega,\\ 
\overline{u}_0=u_{i0}(x)\ge0,&\quad \text{ for } \quad x \in \Omega,
\end{cases}
\end{equation}
where $g_i(t,\cdot)$ is considered a given function satisfying $\|g(t,\cdot)\|_{L^{\frac{{\mu_0}}{\nu}}(\Omega)}<C_1(t)$
since $\|u(t,\cdot)\|_{L^{\mu_0}(\Omega)}<C(t)$ and $C_1(t)$ has the same properties as $C(t)$.

Thus, by the comparison principle of weak solutions of parabolic equations with values in the reflexive Banach space $L^{\frac{{\mu_0}}{\nu}}(\Omega)$ for $\frac{{\mu_0}}{\nu}>1$ (see e.g. \cite[Theorem 11.9]{Chipot}), we have that $0\leq u\leq \overline{u}$.

Next, we refer e.g. to Rothe \cite{rothe} for the following semi-group estimates of the Laplace operator 
subject to Neumann boundary conditions 
\[ 
\Vert (e^{t\Delta}u)(t,\cdot)\Vert_r\leq C(q,r)\max\{1, t^{-\frac{N}{2}(\frac{1}{q}-\frac{1}{r})}\}\Vert u(t,\cdot)\Vert_q, \qquad 1\leq q\leq r\leq \infty, 
\] 
with $C(q,r)>0$, which implies for  $L_i=d_i\Delta-\lambda$ subject to Neumann boundary condition
\begin{equation*} 
\Vert (e^{tL_i}u)(t,\cdot)\Vert_r\leq C(q,r)e^{-\lambda t}\max\{ 1,(d_it)^{-\frac{N}{2}(\frac{1}{q}-\frac{1}{r})}\}\Vert u(t,\cdot)\Vert_q, \qquad 1\leq q\leq r\leq \infty. 
\end{equation*}
Equation \eqref{p1} implies for all $\tau>0$
\begin{equation} 
\overline{u}(t,\cdot)=e^{(t-\tau)L_i}u_{i}(\tau)+\int_\tau^te^{(t-s)L_i}g(s,\cdot)\,ds. 
 \label{eqn:int-0}
\end{equation} 
By taking the $r$-norm of (\ref{eqn:int-0}) and applying the above semi-group estimate with $q=\frac{{\mu_0}}{\nu}$, we obtain
\begin{equation}\label{varconst2}
\|\overline{u}(t,\cdot)\|_r\leq \|e^{(t-\tau)L_i}u_{i}(\tau)\|_r +  \int_\tau^t e^{-\lambda(t-s)}
\max\{ 1,(d_i\,(t-s))^{-\frac{N}{2}(\frac{\nu}{{\mu_0}}-\frac{1}{r})}\}\,C_1(s)\,ds
\end{equation}
and at this point, in order to have the above right hand side integrable near the singularity $s=t$, we need
$$
-\frac{N}{2}\left(\frac{\nu}{{\mu_0}}-\frac1r\right)>-1\qquad\Rightarrow\qquad r<\frac{N{\mu_0}}{N\nu-2{\mu_0}}\quad\text{provided that}\quad \frac{{\mu_0}}{\nu}<\frac{N}{2}, 
$$
while for $\frac{{\mu_0}}{\nu}=\frac{N}{2}$, we can chose any $r<\infty$ and for  $\frac{{\mu_0}}{\nu}>\frac{N}{2}$, we can set $r=\infty$.
In all these cases, we can estimate eq. \eqref{varconst2} as
\begin{equation*}
\|\overline{u}(t,\cdot)\|_r\leq C(\tau) + \sup_{\tau\le s \le t} C_1(s)
\int_\tau^t 
\left(1+(d_i(t-s))^{-\frac{N}{2}(\frac{\nu}{{\mu_0}}-\frac{1}{r})}\right)ds
\leq C(\tau) + \sup_{\tau\le s \le t} C_1(s) \,O(t) 
\end{equation*}
and the $r$-norm is bounded by a polynomial of order $n+1$, if $C_1$ is a polynomial of order $n$.
Recalling that $0\leq u\leq \overline{u}$, we conclude that 
\begin{equation}\label{varconst4}
\|{u}(t,\cdot)\|_r\leq C_2(t),\qquad t\ge\tau>0,
\end{equation}
where $C_2(t)$ grows polynomially in time. 

In order to bootstrap the estimates  \eqref{eqn:int-0}--\eqref{varconst4}, we require $\mu_1:=r>{\mu_0}>\nu$, which is possible  
provided that 
$$
{\mu_0} < \frac{N{\mu_0}}{N\nu-2{\mu_0}} \quad\iff\quad {\mu_0} > \frac{(\nu-1)N}{2}.
$$
Thus, we find that assumption \eqref{bootstrapcon} allows to start a bootstrap argument, which leads after finitely many steps 
to  
\begin{equation*}
\|{u}(t,\cdot)\|_{\mu_n}\leq C_{n+1}(t),\qquad t\ge\tau>0,
\end{equation*}
where $\frac{\mu_n}{\nu}\ge \frac{N}{2}$ and $C_{n+1}(t)$ grows polynomially in time.
Thus, after one or two (if $\frac{\mu_n}{\nu}= \frac{N}{2}$) further bootstrap steps, 
we reach  
\[ 
\Vert u(\cdot,t)\Vert_\infty \leq C(t), \qquad t\ge\tau> 0,
\] 
where for all $\tau>0$ the bound $C(t)$ grows polynomially in time.
\end{proof}

We can now present the proof of our main Theorem.

\subsubsection*{Proof of Theorem \ref{Theorem} (Global existence of classical solutions to system \eqref{sys4})}

The first part of the proof establishes a bootstrap argument based on the Lemmata \ref{equal} and \ref{unequal} in order to prove the following gain of regularity: Provided that $\Gamma> \frac{N+2}{2}-\frac{2N}{N+2}$ and initial data $u_{i0}\in L^{\frac{3N}{4}}(\Omega)$, then
\begin{equation*}
\label{bootstrap}u\in L^{2\Gamma,2\Gamma}\quad\Rightarrow\quad f\in L^{\Gamma,\Gamma} 
\quad\Rightarrow\quad u\in L^{\sigma_0,\tau_0} 
\quad\Rightarrow\quad u\in L^{\sigma_1,\tau_1}
\quad\Rightarrow\quad
\ldots \quad\Rightarrow\quad u\in L^{\frac{3N}{4},\infty}.
\end{equation*}
Once we know that $u\in L^{\frac{3N}{4},\infty}$, then Lemma \ref{weakcomparisonbootstrap} implies $u\in L^{\infty,\infty}$
with constants, which grow at most polynomially in time. In a final step, we shall then interpolate polynomially growing a-priori estimates with the exponentially convergence of solutions towards equilibrium in order to prove uniform-in-time boundedness of the solutions.

\begin{flushleft}
{\it Initial Step: A-priori estimate via duality Proposition \ref{propone}}
\end{flushleft}
 
In order to start our argument, we recall assumption \eqref{deltacon1} and estimate \eqref{uapriori} of Proposition \ref{propone}, which 
ensures that $u\in L^{2\Gamma,2\Gamma}$ for $\Gamma > \frac{N+2}{2}-\frac{2N}{N+2}>1$ for $N\ge3$. 
We remark again that for $\Gamma\ge\frac{N+2}{2}$, the statement of the Theorem \ref{Theorem}
follows from well known parabolic regularity results, which directly ensure $u\in L^{\infty}(\Omega_T)$ for all $T>0$,
see e.g. \cite{CDF}, which also proves the polynomial growth of the constants in $T$.
\medskip

Given the a-priori estimate $u\in L^{2\Gamma,2\Gamma}$, we apply estimate \eqref{apri1} of Lemma \ref{unequal}, 
$$
u\in L^{2\Gamma,2\Gamma}
\quad\Rightarrow \quad
f\in L^{\Gamma,\Gamma}\quad \text{and}\quad u_0\in L^{p_0}\quad\stackrel{\eqref{apri1}}{\Rightarrow}\quad 
u\in L^{p_0,\infty}\cap L^{q_0,p_0},
$$
where we have set 
\begin{equation*}
\Gamma=\frac{p_0N+2}{N+2}
\quad\Rightarrow\quad p_0=\frac{\Gamma(N+2)-2}{N}> \frac{N}{2}\qquad \text{for} \qquad \Gamma > \frac{N+2}{2}-\frac{2N}{N+2}.
\end{equation*}
Now, if we have $p_0>\frac{3N}{4}$ and, thus, $u\in L^{\frac{3N}{4},\infty}$, then Lemma \ref{weakcomparisonbootstrap} directly implies  $u\in L^{\infty,\infty}$ since condition \eqref{bootstrapcon} is satisfied with $p_0=\mu_0>\frac{3N}{4}>\max\{2,\frac{N}{2}\}$ for $N\ge3$. 
\medskip

In the following, it remains to consider the cases $p_0<\frac{3N}{4}$.
By applying the interpolation Lemma \ref{LLInterpol}, we get for all $\theta_0\in(0,1)$ (and 
$\frac{1}{\sigma_0}=\frac{\theta_0}{p_0}+\frac{1-\theta_0}{q_0}$ and $\frac{1}{\tau_0}=\frac{\theta_0}{\infty}+\frac{1-\theta_0}{p_0}$)
$$
u\in L^{p_0,\infty}\cap L^{q_0,p_0}\hookrightarrow  L^{\sigma_0,\tau_0}\qquad 
\text{with}\qquad
\sigma_0(\theta_0,p_0)=
\frac{p_0N}{N+2\theta_0-2},
\quad 
\tau_0(\theta_0,p)=
\frac{p_0}{1-\theta_0},
$$
which gives the new $f-$regularity:
$$
f\in L^{\frac{\sigma_0}{2},\frac{\tau_0}{2}}.
$$
Since we have $\infty>q_0$ in the above interpolation $L^{p_0,\infty}\cap L^{q_0,p_0}$, it follows that the interpolated time-integrability is always larger than the corresponding space-integrability, i.e. $\tau_0>\sigma_0$. 

In the following bootstrap, we shall therefore chose $\theta_0\in(0,1)$ in order to apply Lemma \ref{unequal}.
In fact in view of applying Lemma \ref{unequal}, we need to set 
$$
\frac{\tau_0}{2}=p_1\qquad\text{and}\qquad\frac{\sigma_0}{2}=r_1(p_1,N)=\frac{p_1N}{N+2(p_1-1)},
$$
where the later choice imposes the condition to set $\theta_0$ to the value  
$$
\theta_0=\frac{p_0}{N}\in(0,1)\qquad \text{since}\qquad p_0<\frac{3N}{4}.
$$
Thus, for all $p_0<\frac{3N}{4}$, it is admissible to set $\theta_0=\frac{p_0}{N}$ and we can bootstrap again via Lemma \ref{unequal}
$$
f\in L^{\frac{\sigma_0}{2},\frac{\tau_0}{2}} \quad \text{and}\quad u_0\in L^{p_1}\quad\stackrel{\eqref{apri2}}{\Rightarrow} \quad
u\in L^{p_1,\infty}\cap L^{q_1,p_1} \quad\Rightarrow\quad f\in L^{\frac{\sigma_1}{2},\frac{\tau_1}{2}}
$$
with
\begin{equation*}
p_1 = \frac{\tau_0}{2} = \frac{p_0}{2(1-\theta_0)} = \frac{N p_0 }{2(N-p_0)},
\qquad \sigma_1(\theta_1,p_1)=\frac{p_1N}{N+2\theta_1-2},\qquad \tau_1(\theta_1,p)=\frac{p_1}{1-\theta_1}
\end{equation*}
If now $p_1\ge \frac{3N}{4}$, then $u\in L^{\frac{3N}{4},\infty}$ and Lemma \ref{weakcomparisonbootstrap} implies $u\in L^{\infty,\infty}$.
Otherwise, by repeating the above argument, we choose again 
$\theta_1=\frac{p_1}{N}\in(0,1)$ (since $p_1<\frac{3N}{4}$) 
in order to have
$$
f\in L^{\frac{\sigma_1}{2},\frac{\tau_1}{2}} \quad \text{and}\quad u_0\in L^{p_2}\quad\stackrel{\eqref{apri2}}{\Rightarrow} \quad
u\in L^{p_2,\infty}\cap L^{q_2,p_2} \quad\Rightarrow\quad f\in L^{\frac{\sigma_2}{2},\frac{\tau_2}{2}}
$$
with 
\begin{equation*}
p_2 = \frac{\tau_1}{2} = \frac{p_1}{2(1-\theta_1)} = \frac{N p_1}{2(N-p_1)}.
\end{equation*}

It is now straightforward to verify that the following bootstrap recursion (where we have always chosen $\theta_n= \frac{p_n}{N}$)  
\begin{equation*}
p_{n+1} = \frac{Np_n}{2(N-p_n)}, \qquad p_0>\frac{N}{2},
\end{equation*}
has the unique fixed point $p_{\infty} = \frac{N}{2}$ and is strictly monotone increasing for $p_{n} > \frac{N}{2}$ as long as we can chose 
$\theta_n= \frac{p_n}{N}$, i.e. as long as  $p_n<\frac{3N}{4}$. 
Thus, after finitely many iteration steps, we have $p_{n+1}\ge\frac{3N}{4}$ and thus $u\in L^{\frac{3N}{4},\infty}$ and Lemma \ref{weakcomparisonbootstrap} implies $u\in L^{\infty,\infty}$.

\medskip

In a final step, we shall prove uniform-in-time boundedness of the obtained solutions $u\in L^{\infty}(\Omega_T)$.
First we recall that it is well known that weak solutions $u_i\in L^{\infty}(\Omega_T)$ are in fact classical solutions due to standard parabolic 
regularity results. 

Then, for classical solutions $u_i\in L^{\infty}(\Omega_T)$, for which the $L^{\infty}(\Omega)$-norm grows at most polynomially in time, parabolic regularity arguments show also that $\|u_i(t,\cdot)\|_{H^k(\Omega)}$ for all $k\ge1$ grows at most polynomially in time, 
see e.g. \cite{DF2}. 

Thus, Gagliadro-Nirenberg interpolation between $L^1$ and $H^k$ for $k>\frac{N}{2}$ yields a uniform-in-time $L^{\infty}$ bound, i.e. there exists a $\theta \in(0,1)$ such that for all $T>0$
\begin{align*}
\|u_i(\cdot,T)\|_{L^{\infty}(\Omega)} &\le \|u_i(\cdot,T)-u_{i,\infty}\|_{\infty} + \|u_{i,\infty}\|_{\infty} \nonumber \\
&\le G \|u_i(\cdot,T)-u_{i\infty}\|_1^\theta \|u_i(\cdot,T)-u_{i,\infty}\|_{H^k}^{1-\theta}+\|u_{i,\infty}\|_{\infty}, \nonumber \\
&\le C(G,C_1) e^{-\frac{C_2\theta}{2} T} C_T^{1-\theta}+C(M)<C, 
\end{align*}
where we have used the exponential convergence to equilibrium stated in Proposition \ref{Convergence} and the polynomial-in-time dependence of the 
constant $C_T$ to obtain the uniform-in-time $L^{\infty}$-bound \eqref{Linftybound}.

This concludes the proof of Theorem \ref{Theorem}.
\hfill $\square$

\begin{Rem}
For $N=3$ and $p_0=\frac{N}{2}=\frac{3}{2}$, we calculate $\Gamma=\frac{13}{10}$ and thus observe that 
$$
\sigma_0 =\frac{9}{4} < \frac{13}{5} = 2\Gamma, \qquad\text{and}\qquad
\tau_0 = 3 > \frac{13}{5} = 2\Gamma.
$$
In fact, we expect in general that $\sigma_0<2\Gamma<\tau_0$.
\end{Rem}

\subsection{Proof Corollary \ref{Corollary}}
\begin{proof}
First, we remark that thanks due the conservation laws assumed in Corollary \ref{Corollary}, we observe that for every 
$i=1,\ldots,k$ the sum $z_i:= \sum_{j=1}^k a^i_j u_j$ where $a^i_i>0$ satisfies a problem of the form 
\begin{equation*}
\begin{cases}
\partial_t z_i - \Delta \left(A_i\,z_i\right) \le 0, \\
 n(x)\cd\nx z_i(t,x) = 0, \\
A_i(t,x):= \frac{\sum_{j=1}^k d_i \,a^i_j\,u_j}{\sum_{i=1}^k a^i_j\,u_j} \in [a,b],
\end{cases}
\end{equation*} 
where $0<a:=\inf_{i= 1..k} \{d_i\}$ and $b:=\sup_{i= 1..k} \{d_i\}$.

Thus, the $\delta$-smallness condition \eqref{deltacon2} and Proposition \ref{propone} ensure for all $T>0$ 
the formal a-priori estimate $u_i\in L^{\nu \Gamma}(\Omega_T)$ and thus for all $i=1,\ldots,k$, $f_i(u)\in L^{\Gamma}$ and it is easy verified that $\Gamma>1$. Thus, the existence of weak global solutions to 
system \eqref{sys4GEN} can be shown as the limit of global solutions to suitable approximating systems: 
By using, for instance, approximating systems, which truncate the nonlinearities $f_i(u)$ following the lines of e.g. Michel Pierre \cite{Pie03}, we can 
pass to the limit in system \eqref{sys4GEN} and in particular in the nonlinear terms on the right hand side $f_i(u)\in L^{\Gamma}$ for $\Gamma>1$. This will yield global weak solutions of \eqref{sys4GEN} satisfying $f_i(u)\in L^{\Gamma}$.
\medskip

In the following, we shall prove that these global weak solutions are indeed classical solutions. 
This is done analog to the proof of Theorem  \ref{Theorem}:  	
We start by proving that $u\in L^{\nu\Gamma,\nu\Gamma}$ with initial data $u_{i0}\in L^{\frac{(2\nu-1)N}{4}}(\Omega)$ can be bootstrapped to $u\in L^{\frac{(2\nu-1)N}{4},\infty}$
provided that
$$
\Gamma>\frac{(\nu-1)N^2+4}{2 (N+2)}.
$$
Given the a-priori estimate $u\in L^{\nu\Gamma,\nu\Gamma}$, we apply estimate \eqref{apri1} of Lemma \ref{unequal}, 
$$
u\in L^{\nu\Gamma,\nu\Gamma}
\quad\Rightarrow \quad
f\in L^{\Gamma,\Gamma} \quad \text{and}\quad u_0\in L^{p_0}\quad\stackrel{\eqref{apri1}}{\Rightarrow}\quad 
u\in L^{p_0,\infty}\cap L^{q_0,p_0},
$$
where we have set 
\begin{equation*}
\Gamma=\frac{p_0N+2}{N+2}
\quad\Rightarrow\quad p_0=\frac{\Gamma(N+2)-2}{N}> \frac{(\nu-1)N}{2}\qquad \text{for} \qquad \Gamma > \frac{(\nu-1)N^2+4}{2(N+2)}.
\end{equation*}
Thus, in case that $p_0\ge \frac{(2\nu-1)N}{4}$, we have $u\in L^{\frac{(2\nu-1)N}{4},\infty}$ and Lemma \ref{weakcomparisonbootstrap} implies  $u\in L^{\infty,\infty}$ since $p_0=\mu_0 \ge \frac{(2\nu-1)N}{4} >\min\{\nu,\frac{(\nu-1)N}{2}\}$ for $N\ge 3$.
\medskip

It remains to consider $p_0<\frac{N(2\nu-1)}{4}$.
By applying the interpolation Lemma \ref{LLInterpol}, we get for all $\theta_0\in(0,1)$ (and 
$\frac{1}{\sigma_0}=\frac{\theta_0}{p_0}+\frac{1-\theta_0}{q_0}$ and $\frac{1}{\tau_0}=\frac{\theta_0}{\infty}+\frac{1-\theta_0}{p_0}$)
$$
u\in L^{p_0,\infty}\cap L^{q_0,p_0}\hookrightarrow  L^{\sigma_0,\tau_0}\qquad 
\text{with}\qquad
\sigma_0(\theta_0,p_0)=
\frac{p_0N}{N+2\theta_0-2},
\quad 
\tau_0(\theta_0,p)=
\frac{p_0}{1-\theta_0},
$$
which gives the new $f-$regularity:
$$
f\in L^{\frac{\sigma_0}{\nu},\frac{\tau_0}{\nu}}.
$$
We proceed analog to Theorem \ref{Theorem} and therefore chose $\theta_0\in(0,1)$ by setting 
$$
\frac{\tau_0}{\nu}=p_1\qquad\text{and}\qquad\frac{\sigma_0}{\nu}=r_1(p_1,N)=\frac{p_1N}{N+2(p_1-1)},
$$
where the later choice imposes the condition to set $\theta_0$ to the value  
$$
\theta_0=\frac{2p_0}{\nu N}\in(0,1), \qquad \text{since} \quad p_0<\frac{(2\nu-1)N}{4}.
$$

Thus, by applying a bootstrap recursion similar to Theorem \ref{Theorem}, we have 
 (by setting always $\theta_n= \frac{2p_n}{\nu N}$)  
\begin{equation}\label{recus}
p_{n+1} = \frac{\tau_n}{\nu} = \frac{p_n}{\nu(1-\theta_n)} = \frac{Np_n}{\nu N-2p_n}, \qquad p_0>\frac{(\nu-1)N}{2}.
\end{equation}
It is then straightforward to verify that the recursion \eqref{recus} has a unique fixed point $p_{\infty} = \frac{(\nu-1)N}{2}$ and is monotone increasing for $p_{n} > \frac{(\nu-1)N}{2}$  as long as we can chose 
$\theta_n= \frac{p_n}{N}$, i.e. as long as  $p_n<\frac{(2\nu-1)N}{4}$. 

Thus, after finitely many iteration steps, we obtain $p_n\ge\frac{(2\nu-1)N}{4}$ and thus $u\in L^{\frac{(2\nu-1)N}{4},\infty}$ and Lemma \ref{weakcomparisonbootstrap} implies $u\in L^{\infty,\infty}$. Thus, by applying standard parabolic argument, we obtain classical solutions and the statement of Corollary \ref{Corollary} is proven.  
\end{proof}

\section{Appendix}\label{app}
We remark that the following result is certainly not new but since we couldn't find any reference we provide a proof for the convenience of the reader.
	
\begin{Lem}[Interpolation in Bochner spaces] \label{LLInterpol}
Consider $p_1,p_2\in[1,+\infty]$ with $p_1 \neq p_2$ and $q_1,q_2\in[1,+\infty]$ with $q_1 \neq q_2$.

Then,  
$$
L^{p_1,q_1}\cap L^{p_2,q_2}\hookrightarrow  L^{s,r}
$$
and we have  for any $\theta\in(0,1)$
$$
s\in(p_1,p_2) \quad \text{with}\quad \frac{1}{s}=\frac{\theta}{p_1}+\frac{1-\theta}{p_2}, \qquad\text{and}\qquad
r\in(q_1,q_2) \quad \text{with}\quad \frac{1}{r}=\frac{\theta}{q_1}+\frac{1-\theta}{q_2}.
$$
\end{Lem}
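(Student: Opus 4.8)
The plan is to prove the embedding $L^{p_1,q_1}\cap L^{p_2,q_2}\hookrightarrow L^{s,r}$ by applying H\"older's inequality twice — once in the spatial variable on each time slice, and once in the time variable. The key observation is that the interpolation identity $\frac1s=\frac{\theta}{p_1}+\frac{1-\theta}{p_2}$ is exactly the condition under which the interpolation inequality for Lebesgue norms on $\Omega$ holds, namely $\|u(t,\cdot)\|_{L^s(\Omega)}\le \|u(t,\cdot)\|_{L^{p_1}(\Omega)}^{\theta}\|u(t,\cdot)\|_{L^{p_2}(\Omega)}^{1-\theta}$ for a.e. $t\in(0,T)$. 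This is just the classical three-line / log-convexity inequality for $L^p$ norms applied pointwise in time, using $s\in(p_1,p_2)$.

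First I would fix $\theta\in(0,1)$ and define $s,r$ by the two stated identities (noting that since $p_1\neq p_2$ and $q_1\neq q_2$ the values $s$ and $r$ indeed lie strictly between the endpoints and are well-defined). Next, for a.e.\ $t$, apply the spatial interpolation inequality above. Then raise this to the power $r$ and integrate in $t\in(0,T)$; on the right-hand side one is left with $\int_0^T \|u(t,\cdot)\|_{L^{p_1}(\Omega)}^{\theta r}\,\|u(t,\cdot)\|_{L^{p_2}(\Omega)}^{(1-\theta)r}\,dt$. To this I would apply H\"older's inequality in $t$ with the conjugate pair of exponents $\bigl(\tfrac{q_1}{\theta r},\tfrac{q_2}{(1-\theta)r}\bigr)$; these are genuinely conjugate precisely because $\frac{\theta r}{q_1}+\frac{(1-\theta) r}{q_2}=1$, which is the defining identity for $r$ rearranged. (If one of $q_1,q_2$ equals $+\infty$, the corresponding factor is simply taken out in sup-norm, and if one of $p_1,p_2=+\infty$ one likewise uses the trivial bound $\|u\|_{L^\infty}\le\|u\|_{L^\infty}$ in the spatial step; these degenerate cases need a sentence each.) This yields
$$
\|u\|_{s,r}^{r}\le \|u\|_{p_1,q_1}^{\theta r}\,\|u\|_{p_2,q_2}^{(1-\theta)r},
$$
hence $\|u\|_{s,r}\le \|u\|_{p_1,q_1}^{\theta}\,\|u\|_{p_2,q_2}^{1-\theta}\le \max\{\|u\|_{p_1,q_1},\|u\|_{p_2,q_2}\}$, which gives the claimed continuous embedding.

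There is no serious obstacle here; the only point requiring a little care is the bookkeeping of the H\"older exponents and the verification that the measurability of $t\mapsto\|u(t,\cdot)\|_{L^{p_i}(\Omega)}$ is in place so that Tonelli/Fubini and the one-dimensional H\"older inequality apply — but this is standard for elements of Bochner spaces. The remaining minor nuisance is handling the endpoint cases $p_i=\infty$ or $q_i=\infty$ uniformly; I would simply remark that in those cases the corresponding H\"older factor degenerates to an $L^\infty$ sup and the argument goes through verbatim.
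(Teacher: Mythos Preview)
Your proposal is correct and follows essentially the same route as the paper: a spatial H\"older/interpolation step yielding $\|u(t,\cdot)\|_{L^s}\le\|u(t,\cdot)\|_{L^{p_1}}^{\theta}\|u(t,\cdot)\|_{L^{p_2}}^{1-\theta}$, followed by integration in $t$ and a temporal H\"older inequality with the conjugate pair determined by $\frac{\theta r}{q_1}+\frac{(1-\theta)r}{q_2}=1$, arriving at $\|u\|_{s,r}\le\|u\|_{p_1,q_1}^{\theta}\|u\|_{p_2,q_2}^{1-\theta}$. The only difference is cosmetic: the paper writes out the spatial H\"older step explicitly rather than invoking the interpolation inequality as a black box, and does not separately comment on the endpoint cases $p_i=\infty$ or $q_i=\infty$.
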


\begin{proof}

	We start with the spatial $L^s$-norm:
	$$
	\int_\Omega |u|^s\,dx=
	\int_\Omega |u|^{s\theta}|u|^{s(1-\theta)}\,dx
	\leq
	\left(\int_\Omega|u|^{s\theta p}\,dx\right)^{\frac1p}
	\left(\int_\Omega|u|^{s(1-\theta) p'}\,dx\right)^{\frac{1}{p'}}
	$$
	for any $\theta\in(0,1)$ after applying H\"older's inequality with $\frac1p+\frac{1}{p'}=1$. Next, we integrate over $(0,T)$ after raising to the power $\frac{r}{s}$:
		$$
	\int_0^T\left(\int_\Omega |u|^s\,dx\right)^{\frac{r}{s}}dt
	\leq
	\int_0^T\left(\int_\Omega|u|^{s\theta p}\,dx\right)^{\frac{r}{ps}}
	\left(\int_\Omega|u|^{s(1-\theta) p'}\,dx\right)^{\frac{r}{p's}}dt.
	$$
	By setting $\frac{1}{s p} = \frac{\theta}{p_1}$ and $\frac{1}{s p'} = \frac{1-\theta}{p_2}$ (from which follows $\frac{1}{s}=\frac{\theta}{p_1}+\frac{1-\theta}{p_2}$ since $\frac1p+\frac{1}{p'}=1$ and thus $s\in(p_1,p_2)$ for $\theta\in(0,1)$), we have
			$$
	\int_0^T\left(\int_\Omega |u|^s\,dx\right)^{\frac{r}{s}}dt
	\leq
	\int_0^T\left(\int_\Omega|u|^{p_1}\,dx\right)^{\frac{r\theta}{p_1}}
	\left(\int_\Omega|u|^{p_2}\,dx\right)^{\frac{r(1-\theta)}{p_2}}dt.
	$$
	Applying H\"older's inequality (in time) with  $1=\frac{1}{\mu}+\frac{1}{\mu'}$ yields
	$$
	\int_0^T\left(\int_\Omega |u|^s\,dx\right)^{\frac{r}{s}}dt
	\leq
		\left(\int_0^T\left(\int_\Omega|u|^{p_1}\,dx\right)^{\mu\frac{r\theta}{p_1}}dt\right)^\frac{1}{\mu}
	\left(\int_0^T\left(\int_\Omega|u|^{p_2}\,dx\right)^{\mu'\frac{r(1-\theta)}{p_2}}dt\right)^\frac{1}{\mu'}.
	$$
	By setting again $\frac{1}{r \mu} = \frac{\theta}{q_1}$ and $\frac{1}{r \mu'} = \frac{1-\theta}{q_2}$ (from which follows again $\frac{1}{r}=\frac{\theta}{q_1}+\frac{1-\theta}{q_2}$ since $\frac1\mu+\frac{1}{\mu'}=1$ and thus $r\in(q_1,q_2)$ for $\theta\in(0,1)$), we derive after raising to the power $\frac1r$:
			$$
	\|u\|_{s,r}\equiv\left(\int_0^T\|u\|_s^rdt\right)^\frac1r
	\leq
	\left(\int_0^T\|u\|_{p_1}^{q_1}dt\right)^\frac{1}{r\mu}
	\left(\int_0^T\|u\|_{p_2}^{q_2}dt\right)^\frac{1}{r\mu'}=\|u\|_{p_1,q_1}^{\theta}\|u\|_{p_2,q_2}^{1-\theta}
	$$
\end{proof}

\section{Acknowledgements }
This work has partially been supported by NAWI Graz. K.F. and E.L. gratefully acknowledge the kind hospitality of the University of Osaka.

\end{document}